\newtheorem{thm}{Theorem}
\newtheorem{lem}{Lemma}
\newtheorem{prop}{Proposition}
\theoremstyle{definition}
\newtheorem{rem}{Remark}
\newtheorem{ques}{Question}
\newtheorem{prob}{Problem}
\renewcommand{\Re}{\mathbb R}
\newcommand{\BB}{\mathbf B}
\renewcommand{\S}{\mathbb{S}}
\def\eea{\end{eqnarray}}
\DeclareMathOperator{\bd}{bd}
\DeclareMathOperator{\conv}{conv}
\definecolor{green}{rgb}{0.0, 0.75, 0.25}
\begin{document}

\title[Mono-monostatic convex bodies]{A characterization of the symmetry groups of mono-monostatic convex bodies}

\author[G. Domokos]{G\'abor Domokos}
\author[Z. L\'angi]{Zsolt L\'angi}
\author[P. V\'arkonyi]{P\'eter L. V\'arkonyi}

\address{MTA-BME Morphodynamics Research Group and Department of Mechanics, Materials \& Structures, Budapest University of Technology, Műegyetem rakpart 1-3., Budapest, Hungary, 1111} 
\email{domokos@iit.bme.hu}
\address{MTA-BME Morphodynamics Research Group and Department of Geometry, Budapest University of Technology, Egry J\'{o}zsef utca 1., Budapest, Hungary, 1111} 
\email{zlangi@math.bme.hu}
\address{Department of Mechanics, Materials \& Structures, Budapest University of Technology, Műegyetem rakpart 1-3., Budapest, Hungary, 1111} 
\email{varkonyi.peter@epk.bme.hu}

\thanks{The research reported in this paper was supported by the
Thematic Excellence Program of the Ministry for Innovation and Technology in Hungary TKP2021-BME-NVA at the Budapest University of Technology and
the NKFIH grant K134199.}

\subjclass[2010]{52B10, 52B15, 74G05}
\keywords{monostable, mono-monostatic, symmetry}

\begin{abstract}
Answering a question of Conway and Guy in a 1968 paper, L\'angi in 2021 proved the existence of a monostable polyhedron with $n$-fold rotational symmetry for any $n \geq 3$, and arbitrarily close to a Euclidean ball. In this paper we strengthen this result by characterizing the possible symmetry groups of all mono-monostatic smooth convex bodies and convex polyhedra. Our result also answers a stronger version of the question of Conway and Guy, asked in the above paper of L\'angi.
\end{abstract}

\maketitle

\section{Introduction}

\subsection{Monostatic shapes}
Not only was the general study of the relationship between geometric shape and the number of static balance points initiated by Archimedes \cite{archimedes}, his design of ship hulls immediately highlighted the special role of \emph{monostatic} shapes, i.e. bodies having just one stable or one unstable static balance position. In the former case monostatic objects are also referred to as \emph{monostable} while in the latter case as \emph{mono-unstable}.  The same idea was re-visited in 1966 by Conway and Guy \cite{conway}
who conjectured that there is no homogeneous tetrahedron which can stand in rest only on one of its faces when placed on a horizontal plane (i.e. homogeneous, monostable tetrahedra do not exist), but there is a homogeneous convex polyhedron with the same property (i.e. homogeneous, monostable polyhedra do exist).  The full proof of the first conjecture was published by Dawson \cite{Dawson} who credited Conway with the idea.  The second conjecture was resolved in \cite{goldberg} by an explicit monostable polyhedral construction with $D_2$-symmetry, which has been dubbed the Conway-Guy polyhedron.  In the same paper Guy presented some problems regarding monostable polyhedra, stating that three of them are due to Conway. These three questions (which we list below) appear also in the problem collection of Croft, Falconer and Guy \cite{CFG} as Problem B12, as well as in the collection of Klamkin \cite{Klamkin}. Problem~\ref{prob1} and  some other problems for monostable polyhedra appear in a 1968 collection of geometry problems of Shephard \cite{Shephard}, who described these objects as `a remarkable class of convex polyhedra' whose properties `it would probably be very rewarding and interesting to make a study of'.

\begin{prob}\label{prob1}
Can a monostable polyhedron in the Euclidean $3$-space $\Re^3$ have an $n$-fold axis of symmetry for $n > 2$?
\end{prob}

Before the next problem, recall that the \emph{girth} of a convex body in $\Re^3$ is the minimum perimeter of an orthogonal projection of the body onto a plane \cite{dumitrescu}.

\begin{prob}\label{prob2}
What is the smallest possible ratio of diameter to girth for a monostable polyhedron?
\end{prob}

\begin{prob}\label{prob3}
What is the set of convex bodies uniformly approximable by monostable polyhedra, and does this contain the sphere?
\end{prob}

The second-named author of the current paper answered in \cite{langi} the first two questions by showing Theorem~\ref{thm:monostable}, where $d_H(\cdot, \cdot)$ and $\BB^3$ denotes the Hausdorff distance of convex bodies, and the closed unit ball in $\Re^3$ centered at the origin $o$, respectively. 

\begin{thm}[L\'angi, 2021]\label{thm:monostable}
For any $n \geq 3$, $n \in \mathbb{Z}$ and $\varepsilon > 0$ there is a homogeneous monostable polyhedron $P$ such that $P$ has an $n$-fold rotational symmetry and
$d_H(P, \BB^3) < \varepsilon$.
\end{thm}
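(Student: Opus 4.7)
The plan is to produce $P$ in three stages: build a smooth $n$-fold symmetric monostable body close to the ball, then discretize carefully.

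First I would construct a smooth convex body of revolution $K_0$ close to $\BB^3$ that is monostable. Starting from the unit ball, apply a small $C^2$ axially symmetric perturbation that sharpens one pole and flattens the other while shifting the centroid along the axis. The two poles are then the only axial equilibria; by choosing the perturbation appropriately one can arrange that the distance-from-centroid function on $\bd K_0$ has no off-axis critical circles, so that one pole is stable, the other unstable, and $K_0$ is monostable with full $\mathrm{SO}(2)$ symmetry. This reduces the problem to a one-dimensional profile question similar to the classical planar monostatic construction.

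Next I would break the continuous symmetry to the discrete group by a small smooth $n$-fold symmetric radial perturbation of $K_0$, obtaining a smooth convex body $K$. Since the two axial equilibria are non-degenerate critical points of the distance function from the centroid, they persist and remain one stable and one unstable under a sufficiently small $C^2$ perturbation. The absence of off-axis equilibria is an open condition in the $C^2$ topology restricted to convex bodies near the ball, so it survives as well. The resulting $K$ is smooth, convex, monostable, has exactly $n$-fold rotational symmetry about the original axis, and is arbitrarily Hausdorff-close to $\BB^3$.

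The delicate step is the polyhedral approximation. I would select an $n$-fold symmetric family of supporting half-spaces of $K$, together with one distinguished face $F_\ast$ perpendicular to the symmetry axis at the stable pole, and define $P$ as their intersection. For monostability I need the orthogonal projection of the centroid of $P$ onto $F_\ast$ to lie in its relative interior, while for every other face $F$ the projection must fall strictly outside $F$. The first condition is automatic if the discretization is symmetric and fine enough, using that $K_0$ (and hence $K$) has a stable pole perpendicular to the axis. For the second, the key point is that on $\bd K$ the component of the outward normal pointing away from the centroid is bounded away from zero at every non-polar point (since $K$ has no other equilibria), so small tangent facets inherit this property and their centroid projections miss them by a definite amount; by $n$-fold symmetry the argument is uniform over each orbit of faces. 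Convexity and Hausdorff-closeness follow from standard tangential approximation estimates.

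The main obstacle is the last step: $n$-fold symmetry forces faces into orbits of size $n$, so a single stable face other than $F_\ast$ would immediately yield $n$ stable equilibria. Quantifying the uniform lower bound on the "instability margin" of $K$ at non-polar boundary points, and translating this into a concrete face-selection rule that keeps $P$ convex, $n$-fold symmetric, and close to $\BB^3$, is the technical heart of the proof; once that bound is in hand, a compactness argument on $\bd K$ closes the construction.
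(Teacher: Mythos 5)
Your construction fails at Stage~1, and the failure is not a repairable technicality but the central obstruction that the known constructions are built to overcome: the rotationally symmetric body $K_0$ you posit does not exist. Put the centroid of $K_0$ at the origin and write its radial function as $R(\theta)$, $\theta\in[-\pi/2,\pi/2]$. Your requirement that the distance-from-centroid function have no off-axis critical circles forces $R$ to be (weakly) monotone on $(-\pi/2,\pi/2)$, say nondecreasing and nonconstant; but then the first moment of $K_0$ with respect to the $(x,y)$-plane is
\[
\int_{K_0} z\, dv \;=\; \frac{\pi}{2}\int_{-\pi/2}^{\pi/2}R(\theta)^4\sin\theta\cos\theta\, d\theta \;=\; \frac{\pi}{2}\int_{0}^{\pi/2}\bigl(R(\theta)^4-R(-\theta)^4\bigr)\sin\theta\cos\theta\, d\theta \;>\;0,
\]
so the centroid is not at the origin --- a contradiction. (Dropping monotonicity does not help: an interior critical circle of $R$ is automatically a degenerate circle of equilibria, and the only configurations compatible with a centered centroid produce either two stable poles or a circle of stable-type points that splits into $n$ stable equilibria under your Stage~2 perturbation.) This is exactly what the computation $H(1,d)=\frac{4\pi}{5}d^2+\frac{4\pi}{3}>0$ in the proof of Lemma~\ref{lem:centering} records: the $c=1$ member of the family $K_n(c,d)$ is precisely a body of revolution with monotone radial profile, and its center of mass provably lies strictly off the origin. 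More generally, Lemma~\ref{lem:finite} and the remark preceding it (citing \cite{DK}) already state that no nondegenerate convex body is rotationally symmetric, so the object your Stages~2 and~3 are meant to perturb and discretize is vacuous.

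The way around this --- used in \cite{gomboc}, in \cite{langi}, and in Section~\ref{sec:construction} here --- is to make the construction genuinely azimuth-dependent from the outset: the radial function $\rho_c(\theta,\varphi)$ interpolates between two \emph{different} strictly increasing meridian profiles $\sin f_c(\theta)$ and $\sin g_c(\theta)$, one ``delayed'' and one ``advanced,'' so that every meridian is monotone (killing all non-polar equilibria and fixing one stable and one unstable pole) while the first moment $H(c,d)$ can change sign as $c$ varies; an intermediate-value argument in $c$ then centers the body (Lemma~\ref{lem:centering}), and convexity is recovered for $d$ small (Lemma~\ref{lem:convexity}). Your Stage~3 --- a symmetric polyhedral approximation preserving the equilibrium class --- is essentially the content of Theorem~\ref{thm:approx} quoted from \cite{langi}; your sketch of it is plausible in outline, but it cannot rescue a proof whose input object does not exist.
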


We remark that among all convex bodies in $\Re^3$, the ratio of diameter to girth is minimal for the Euclidean ball (see, e.g. \cite{langi}), hence Theorem \ref{thm:monostable} implies that the answer to Problem \ref{prob2} is $\frac{1}{\pi}$.

\subsection{Mono-monostatic shapes}
{While the class of monostatic polyhedra already appears to have intriguing geometric properties, the study of an even more elusive class of convex bodies was initiated by Arnold \cite{lunch} who conjectured that smooth, homogeneous, convex bodies with just two static balance points (one stable, one unstable) may exist. His conjecture was proven in \cite{gomboc} where this class of bodies was dubbed \emph{mono-monostatic} and  an explicit construction with $D_2$-symmetry was presented. On the other hand, the general symmetry properties of smooth mono-monostatic shapes have not been explored.
Little is known about the polyhedral case either. In \cite{langi}, the author asked the following question, as a stronger version  of Problem~\ref{prob1}.

\begin{ques}[L\'angi, 2021] \label{ques1}
What are the positive integers $n \geq 2$ such that there is a mono-monostatic convex polyhedron with an $n$-fold axis of symmetry?
\end{ques}

In this paper we answer a more general question which goes beyond Question \ref{ques1} and also encompasses the smooth case: our aim is to characterize all
the possible symmetry groups of mono-monostatic bodies, both smooth and polyhedral.

\subsection{Basic concepts}\label{ss:con}
Before formally  stating our main result, we recall some basic concepts from the theory of equilibrium points.

Let $K \subset \Re^3$ be a smooth convex body, i.e. having $C^1$-class boundary, and let the center of mass of $K$ be $p$. Let $\delta_K : \bd(K) \to \Re$ be the Euclidean distance function measured from the point $p$, where $\bd(K)$ denotes the boundary of $K$. The critical points of $\delta_K$ are called \emph{equilibrium points} of $K$. To avoid degeneracy, it is usually assumed that $\delta_K$ is a Morse function; i.e. it has finitely many critical points, $\bd(K)$ is twice continuously differentiable at least in a neighborhood of each critical point, and at each such point the Hessian of $\delta_K$ is nondegenerate. Depending on the number of negative eigenvalues of the Hessian, we distinguish between \emph{stable}, \emph{unstable} and \emph{saddle-type} equilibrium points, corresponding to the local minima, maxima and saddle points of $\delta_K$, respectively. The Poincar\'e-Hopf Theorem implies that under these conditions, the numbers $S$, $U$ and $H$ of the stable, unstable and saddle points of $K$, respectively, satisfy the equation $S-H+U=2$.
For any $S, U \geq 1$ we define the set $(S,U)_c$ as the family of smooth convex bodies $K$ having $S$ stable and $U$ unstable equilibrium points, where $K$ has no degenerate equilibrium point, and at each such point $\bd(K)$ has a positive Gaussian curvature. 

We define the class $(S,U)_p$ analogously for convex polyhedra. More specifically, let $P$ be a convex polyhedron in $\Re^3$ with $p$ as its center of mass. A point $q \in \bd(P)$ is called an equilibrium point of $P$ if the plane through $q$ with normal vector $q-p$ supports $P$. Note that in this case there is a unique vertex, edge or face of $P$ that contains $q$ in its relative interior, where by the relative interior of a vertex we mean the vertex itself. Let $F$ denote this vertex, edge or face, and let $H$ be the supporting plane of $P$ through $q$ that is perpendicular to $q-p$. Observe that $F \subset P \cap H$. We say that $q$ is \emph{nondegenerate} if $F= P \cap H$, and $P$ is nondegenerate if all its equilibrium points are nondegenerate. A nondegenerate equilibrium point $q$ is called a \emph{stable, saddle-type} or \emph{unstable} point of $P$ if the dimension of $F$ is $2,1$ or $0$, respectively \cite{DKLRV}. Again, we may apply the Poincar\'e-Hopf Theorem and obtain that if $P$ is nondegenerate, then the numbers $S$, $U$ and $H$ of the stable, unstable and saddle points of $K$, respectively, satisfy the equation $S-H+U=2$. Thus, for any $S,U \geq 1$, we define the class $(S,U)_p$ as the family of nondegenerate convex polyhedra with $S$ stable and $U$ unstable points (see also \cite{DKLRV} or \cite{langi}). We remark that by \cite{gomboc} and \cite{langi}, the classes $(1,1)_c$ and $(1,1)_p$ are not empty.

Finally, we call a convex body $K \subset \Re^3$ \emph{centered} if its center of mass is the origin $o$.

\subsection{Main result}
Using the concepts of Subsection \ref{ss:con}, we formulate our main result about the possible symmetry groups of mono-monostatic bodies as
\begin{thm}\label{thm:main}
Let $\mathcal{F} = (1,1)_c$ or $\mathcal{F} =(1,1)_p$. Then
\begin{itemize}
\item[(i)] the symmetry group of any element of $\mathcal{F}$ is a discrete $2$-dimensional point group and
\item[(ii)] for any discrete $2$-dimensional point group $G$ and any $\varepsilon > 0$, there is an element $K \in \mathcal{F}$ whose symmetry group is $G$ and which satisfies $d_H(K,\BB^3) < \varepsilon$.
\end{itemize}
\end{thm}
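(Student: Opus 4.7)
Let $K\in\mathcal{F}$ with center of mass $p$ and equilibria $q_s$ (stable) and $q_u$ (unstable). Since $S=U=1$, the Poincar\'e--Hopf identity $S-H+U=2$ forces $H=0$, so $q_s,q_u$ are the only equilibria. Every symmetry $\sigma$ of $K$ fixes $p$ and permutes the equilibria preserving type, so $\sigma(q_s)=q_s$ and $\sigma(q_u)=q_u$. If $p,q_s,q_u$ lie on a common line $\ell$, then $\sigma$ fixes $\ell$ pointwise and acts on the perpendicular plane through $p$ as an element of $O(2)$; a discrete subgroup of $O(2)$ is $C_n$ or $D_n$ for some $n\geq 1$. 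If $p,q_s,q_u$ are not collinear, they span a plane $\Pi$ pointwise fixed by $\sigma$, so the group has order at most $2$ and equals $C_1$ or $D_1$. In either case the symmetry group is a discrete $2$-dimensional point group.

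\textbf{Part (ii), smooth case.} Given a discrete 2D point group $G$ with axis $\ell$, the plan is to realize $K$ as a small $G$-symmetric radial perturbation $\partial K=\{x\in\Re^3:|x|=1+\varepsilon g(\omega)\}$ of $\BB^3$, with $g$ a smooth $G$-invariant function on $S^2$. A direct computation shows that the center of mass of $K$ absorbs the dipole part of $\varepsilon g$, so the squared distance on $\partial K$ from the center of mass pulls back to $S^2$ as $1+\varepsilon^2\psi(\omega)+O(\varepsilon^3)$, with $\psi$ determined by the higher harmonic content of $g$. By choosing $g$ as a careful combination of an $O(2)$-invariant polynomial that is monotone in $\omega_z$ and a higher-order $G$-breaking correction, one arranges $\psi$ so that its only non-degenerate critical points on $S^2$ are the two poles of $\ell$---one minimum and one maximum. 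A generic choice within the $G$-invariant functions then forces the symmetry of $K$ to equal exactly $G$, and $\varepsilon$ can be taken arbitrarily small so that $d_H(K,\BB^3)<\varepsilon$.

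\textbf{Part (ii), polyhedral case, and main obstacle.} For the polyhedral case, the plan is to begin from a $G$-symmetric monostable polyhedron $P_0$ close to $\BB^3$, obtained for rotation parts from Theorem~\ref{thm:monostable}, and adapted to also carry the mirror planes when $G=D_n$. By the uniqueness of $q_s$ the stable face of $P_0$ must lie on the axis $\ell$, and the unstable vertices occur either on $\ell$ or in $G$-orbits off $\ell$. I then truncate each off-$\ell$ orbit of unstable vertices by a $G$-symmetric family of small planar cuts whose normals are tilted away from $v-p$ by a fixed positive angle; the tilt ensures that the new faces fail to be stable equilibria, and the saddle edges incident to the truncated vertices are simultaneously removed. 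After finitely many such $G$-symmetric truncations, only the on-axis unstable vertex remains, yielding an element of $(1,1)_p$ with symmetry exactly $G$ still close to $\BB^3$. The main technical obstacle, common to both parts, is the simultaneous control of all equilibria under the $G$-symmetric deformation: in the smooth case, a generic near-ball body carries an $O(\varepsilon^2)$ equator critical circle of the squared distance function, and eliminating it requires a precise cancellation between the $O(2)$-invariant and $G$-breaking components of $g$; in the polyhedral case, the truncation angle must be chosen uniformly over all $G$-orbits so as to keep every new face out of stable-equilibrium position while preserving the Hausdorff closeness to $\BB^3$.
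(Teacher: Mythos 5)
In part (i) you assume the discreteness you are being asked to prove. After showing that every symmetry fixes the axis $\ell$ through the center of mass and the two equilibria pointwise, you conclude by classifying the \emph{discrete} subgroups of $O(2)$; but nothing in your argument excludes a continuous group of rotations about $\ell$. For polyhedra finiteness is automatic, but for smooth bodies it is a genuine theorem (Lemma~\ref{lem:finite}, i.e.\ Lemma 1 of \cite{langi}; see also \cite{DK}) that a nondegenerate convex body cannot be rotationally symmetric, and the paper must invoke it. This is not pedantry: a body of revolution whose radial function is strictly monotone along meridians has exactly two equilibria, both at the poles, so no Morse-theoretic count rules it out --- the obstruction comes from the location of the center of mass.

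Part (ii) is a plan rather than a proof, and it breaks exactly where the paper has to work hardest. In the smooth case, ``an $O(2)$-invariant function monotone in $\omega_z$ plus a higher-order $G$-breaking correction'' cannot succeed perturbatively: since no rotationally symmetric mono-monostatic body exists, the centering condition necessarily fails for the $O(2)$-invariant leading term (this is precisely the paper's computation $H(1,d)=\frac{4\pi}{5}d^2+\frac{4\pi}{3}>0$ following \eqref{eq:H}), so the symmetry-breaking part cannot be a small correction --- it must shift the center of mass by a definite amount. This is why the paper builds the two-parameter family $K_n(c,d)$, exhibits a sign change of $H(c,d)$ between $c=1$ and $c\to 0^+$, and uses continuity and compactness to secure centering and convexity simultaneously; your step ``one arranges $\psi$ so that its only critical points are the two poles'' is the entire content of Section~\ref{sec:construction} and is not supplied. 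The claim that ``a generic choice forces the symmetry to equal exactly $G$'' also needs an argument; the paper proves it by gluing an $H$-orbit of small balls onto a $D_n$-symmetric example (Lemma~\ref{lem:C1} and Proposition~\ref{prop:subgroup}), which in addition produces the cyclic groups $C_n$ that your construction does not yield directly. In the polyhedral case, each truncation of an unstable vertex creates new vertices and edges, hence new candidate unstable and saddle equilibria whose elimination you do not address, and it perturbs the center of mass, so the Poincar\'e--Hopf bookkeeping is not under control; the paper avoids all of this by deducing the polyhedral case from the smooth one via the symmetry-preserving approximation theorem (Theorem~\ref{thm:approx}).
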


\begin{rem}\label{rem:listofgroups}
We remark that any discrete $2$-dimensional point group is either a dihedral group $D_n$ for some $n \geq 1$, or a rotation group $C_n$ for some $n \geq 1$; here the latter group is isomorphic to the $n$-element cyclic group $\mathbb{Z}_n$ (see e.g. \cite{lft}).
\end{rem}

The proof of Theorem~\ref{thm:main} has two parts. In the first part, contained in Section~\ref{sec:construction}, we show that for any $\varepsilon > 0$ and $n \geq 2$, the family $(1,1)_c$ contains an element $K$ with symmetry group $D_n$ and satisfying $d_H(K,\BB^3) < \varepsilon$. In the second part we show how the existence of such a body implies Theorem~\ref{thm:main}.

\section{The construction of a smooth mono-monostatic convex body with $D_n$ symmetry}\label{sec:construction}

In this section we fix $n \geq 2$. Our construction roughly follows the construction in \cite{gomboc} with suitable modifications to obtain a mono-monostatic convex body with $D_n$ as its symmetry group, and belonging to class $\mathcal{C}^2_+$, i.e. having $C^2$-class boundary and positive Gaussian curvature at every boundary point (cf. e.g. \cite{Schneider}).

To achieve our goal, we define a two-parameter family $\mathcal{F}_n$ of star-shaped bodies $K_n(c,d)$ such that for any choice of the parameters
\begin{itemize}
\item[(A)] the symmetry group of $K_n(c,d)$ is $D_n$;
\item[(B)] $K_n(c,d)$ has $C^2$-class boundary;
\item[(C)] $K_n(c,d)$ has exactly one stable and one unstable point with respect to $o$.
\end{itemize}    
In the next part of the construction, we show that for any $\varepsilon > 0$, there is a suitable choice of the parameters $c,d$ such that
\begin{itemize}
\item[(D)] $K_n(c,d)$ has positive Gaussian curvature at every boundary point, implying also that it is convex;
\item[(E)] the center of mass of $K_n(c,d)$ is $o$;
\item[(F)] $d_H(K_n(c,d), \BB^3) < \varepsilon$.
\end{itemize}

We note that a convex body satisfying the properties (A)-(F) also satisfies the conditions in (ii) of Theorem~\ref{thm:main} in the case $\mathcal{F} = (1,1)_c$ and $G = D_n$.

We describe the construction of the family $\mathcal{F}_n$ in Subsection~\ref{subsec:construction}. Then we prove that the properties (A)-(C) are satisfied for all members of this family in Subsection \ref{subsec:A-C}. In Subsection~\ref{subsec:properties} we show that a suitable member of this family also satisfies the properties (D)-(F).

\begin{figure}
\centering
\includegraphics[width=6cm]{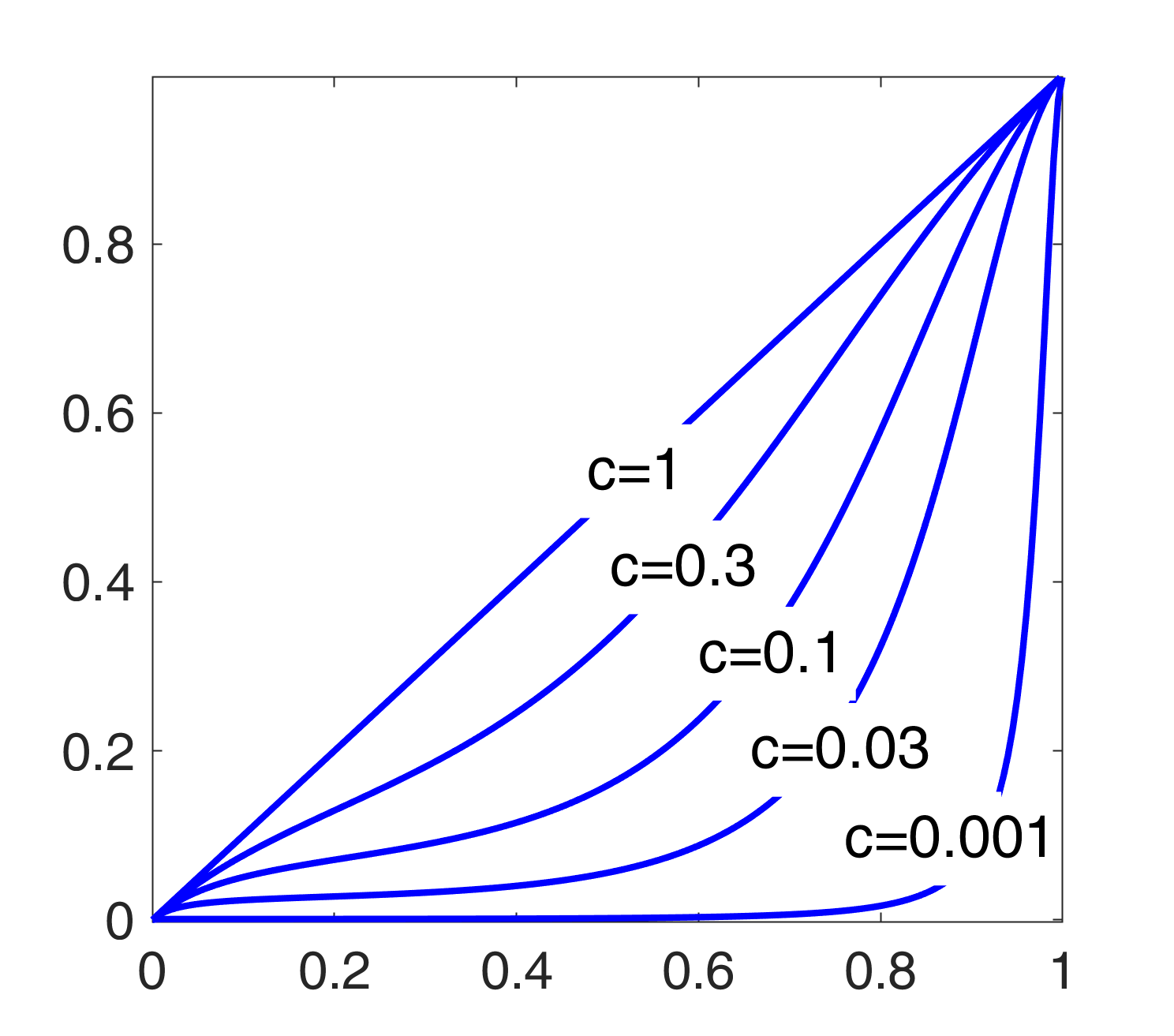}
\caption{Illustration of the function $F_c(x)$.}
\label{fig:Fc}
\end{figure}
                                              
\subsection{Construction of $\mathcal{F}_n$}\label{subsec:construction}

Let us define the function $F_c : [0,1] \to \Re$, 
\begin{equation}\label{eq:defnF}
F_c(x) = \frac{cx^2 + (1-c)(1-x)^2 \cdot \frac{cx}{c+x}}{cx+(1-c)(1-x)^2},
\end{equation}
where $0 < c \leq 1$ (see Fig. \ref{fig:Fc}).

In the next lemma, we collect some properties of $F_c$.
\begin{lem}\label{lem:Fproperties}
For any $c \in (0,1)$, 
\begin{enumerate}
\item[(a)] $F_c$ is smooth.
\item[(b)] $F_c$ is strictly increasing on its domain.
\item[(c)] $F_c(0) = 0$ and $F_c(1) = 1$.
\item[(d)] $(F_c)'_{+}(0)=(F_c)'_{-}(1)=1$.
\end{enumerate}
Furthermore,
\begin{enumerate}
\item[(e)] $F_1(x) = x$ for all $x \in [0,1]$.
\item[(f)] 
$\lim_{c \to 0+0} F_c(x) = 0$ for all $x \in [0,1)$.
\end{enumerate}
\end{lem}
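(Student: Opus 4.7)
The plan is to replace the given formula for $F_c$ by a cleaner factorization that isolates the two mechanisms controlling its behavior. Writing $g(x):=cx+(1-c)(1-x)^2$ for the denominator and reorganizing the numerator via the common factor $\frac{cx}{c+x}$, I would establish the identity
\[
F_c(x)\;=\;\frac{cx}{c+x}\left(1+\frac{x^2}{g(x)}\right)
\;=\;\frac{cx}{c+x}+\frac{cx^3}{(c+x)\,g(x)}.
\]
This algebraic step is the main conceptual ingredient; everything else reduces to bookkeeping.

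Before using the identity, I would check that $g>0$ on $[0,1]$: the endpoint values $g(0)=1-c$ and $g(1)=c$ are positive for $c\in(0,1)$, and both terms defining $g$ are nonnegative on $[0,1]$. Combined with $c+x>0$, this exhibits $F_c$ as a ratio of smooth functions with non-vanishing denominator, giving (a). Claims (c) and (e) follow by direct substitution in the original formula, and (f) is immediate from the factorization, since for fixed $x\in(0,1)$ one has $\frac{cx}{c+x}\to 0$ as $c\to 0^+$ while the parenthesized factor stays bounded.

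For (b) I would prove that each factor in the factorization is positive and strictly increasing on $(0,1]$. The derivative of $\frac{cx}{c+x}$ is $\frac{c^2}{(c+x)^2}>0$, and for the second factor it suffices to show that $x^2/g(x)$ is increasing. Differentiating, the numerator becomes $2xg(x)-x^2g'(x)$, and with $g'(x)=c-2(1-c)(1-x)$ a short simplification collapses it to $cx^2+2(1-c)x(1-x)$, which is positive on $(0,1)$. Together with $F_c(0)=0$, this yields strict monotonicity on all of $[0,1]$.

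For (d) I would use the split form displayed above. The second summand $\frac{cx^3}{(c+x)g(x)}$ vanishes to order three at $x=0$, so its derivative at $0$ is $0$ and $(F_c)'_+(0)$ equals $\left.\frac{d}{dx}\frac{cx}{c+x}\right|_{x=0}=1$. At $x=1$ I would differentiate both summands, use $g(1)=g'(1)=c$ in the quotient rule, and verify that the two contributions $\frac{c^2}{(c+1)^2}$ and $\frac{2c+1}{(c+1)^2}$ sum to $\frac{(c+1)^2}{(c+1)^2}=1$. The only step that needs genuine care is this last calculation, and even it is a routine quotient-rule exercise; the real obstacle is simply spotting the factorization, after which all six properties are mechanical.
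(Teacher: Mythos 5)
Your proof is correct, and I verified the key identity: with $g(x)=cx+(1-c)(1-x)^2$ one indeed has $\frac{cx}{c+x}\bigl(g(x)+x^2\bigr)=cx^2+(1-c)(1-x)^2\cdot\frac{cx}{c+x}$, so $F_c(x)=\frac{cx}{c+x}\bigl(1+\frac{x^2}{g(x)}\bigr)$; the derivative computation $2xg(x)-x^2g'(x)=cx^2+2(1-c)x(1-x)$ and the endpoint sum $\frac{c^2}{(c+1)^2}+\frac{2c+1}{(c+1)^2}=1$ also check out. Your route differs genuinely from the paper's: the paper writes $F_c$ as a \emph{convex combination} $F_c=w\,F_1+(1-w)F_2$ with $F_1(x)=x$, $F_2(x)=\frac{cx}{c+x}$ and weight $w(x)=\frac{cx}{cx+(1-c)(1-x)^2}$, proves (b) by checking that every term in $F_c'=wF_1'+(1-w)F_2'+w'(F_1-F_2)$ is positive, and dismisses (c)--(f) as substitution without detailing (d); you instead use a \emph{multiplicative} factorization whose two factors are each positive and increasing. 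The additive decomposition makes the qualitative picture transparent ($F_c$ interpolates between the identity and $\frac{cx}{c+x}$, which is what drives the limits in (e) and (f)), while your factorization buys a cleaner, fully explicit verification of the one-sided derivatives in (d), which the paper leaves to the reader. Two small points to tighten: for the positivity of $g$ on $(0,1)$ you should simply note that the term $cx$ is strictly positive there (endpoint positivity plus nonnegativity of the two summands does not by itself exclude an interior zero), and in (e) the defining formula for $F_1$ is $0/0$ at $x=0$, so $F_1(0)=0$ should be read as the continuous extension --- a gloss the paper also makes silently.
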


\begin{proof}
To show (a), it is sufficient to show that the denominator of the fraction in (\ref{eq:defnF}) is not zero.
But this fact follows from the observation that the denominator is clearly nonnegative for any $x,c \in [0,1]$, and it is zero if and only if $xc=0$ and $(1-x)(1-c)=0$ are satisfied simultaneously. The solution for this system of equations is $x=0$ and $c=1$, or $x=1$ and $c=0$, neither of which satisfies the conditions in the lemma.

To prove (b), we show that $F'_c(x) > 0$ for any $x \in (0,1)$ and $c \in (0,1)$, and remark that for $c=1$ (b) follows from (e) in a straightforward way.
Note that $F_c(x)$ can be recast as a weighted average of two functions
\[
F_c(x)=w(x)F_1(x)+(1-w(x))F_2(x)
\]
with $F_1(x)=x$, $F_2=(c^{-1}+x^{-1})^{-1}$, and
\[w(x)=\left(1+\frac{(1-c)(1-x^2)}{cx} \right)^{-1}.
\]
Then, we have
\[
F_c'(x)=w(x)F_1'(x)+(1-w(x))F_2'(x)+w'(x)(F_1(x)-F_2(x))
\] 
It is straightforward to show that the terms $w(x)$, $1-w(x)$, $F_1'(x)$, $F_2'(x)$, $F_1(x)-F_2(x)$ are positive, which proves (b).

The statements in (c), (d), (e) and (f) can be obtained by substituting into $F_c$ and its derivative, and in case of (f), also by using the continuity of elementary functions.
\end{proof}

We remark that by Lemma~\ref{lem:Fproperties}, the range of $F_c$ is $[0,1]$ for all $c \in (0,1)$.

In the next step, we define $f_c : \left[ - \frac{\pi}{2}, + \frac{\pi}{2} \right] \to \left[ - \frac{\pi}{2}, + \frac{\pi}{2} \right]$ by 
\begin{equation}\label{eq:f}
f_c(\theta) = \pi F_c\left( \frac{\theta}{\pi} + \frac{1}{2} \right) - \frac{\pi}{2}.
\end{equation}
Note that for any $0 < c < 1$, $f_c$ is a linear image of $F_c$.

Now, set $g_c(\theta) = - f_c(-\theta)$. Clearly, the domain and the range of $g_c$ is $\left[ -\frac{\pi}{2}, \frac{\pi}{2} \right]$. In Remark~\ref{rem:elemprop} we summarize the elementary properties of $f_c$ and $g_c$.

\begin{rem}\label{rem:elemprop}
For any $0 < c \leq 1$, we have  
$f_c \left( \frac{\pi}{2} \right)=g_c \left( \frac{\pi}{2} \right)=\pi/2$, 
$f_c \left( -\frac{\pi}{2} \right)=g_c \left( -\frac{\pi}{2} \right)=-\pi/2$, 
$(f_c)'_{\mp}\left( \pm \frac{\pi}{2} \right)=(g_c)'_{\mp} \left( \pm \frac{\pi}{2} \right) =1$, $(f_c)''_{--}\left( \frac{\pi}{2} \right) = -\frac{2(1-c)}{\pi c (1+c)}, (f_c)''_{++}\left( -\frac{\pi}{2} \right) = - \frac{2}{\pi c}$, $(g_c)''_{--}\left( \frac{\pi}{2} \right) = \frac{2}{\pi c}, (g_c)''_{++}\left(-\frac{\pi}{2} \right) = \frac{2(1-c)}{\pi c (1+c)}$.
\end{rem}

For any $0 < c \leq 1$, $0 \leq \theta \leq 2\pi$ and $\varphi \in \Re$, we set
\begin{equation}\label{eq:a}
a_c(\theta, \varphi) = \frac{ \cos^2 \frac{n \varphi}{2} \cos^2 f_c(\theta)}{ \cos^2 \frac{n \varphi}{2} \cos^2 f_c(\theta) + \sin^2 \frac{n \varphi}{2} \cos^2 g_c(\theta) },
\end{equation}
as illustrated by Fig. \ref{fig:ac}. Note that if $\cos \frac{n\varphi}{2} \neq 0$ (i.e. if $\varphi$ is not an integer multiple of $\frac{2\pi}{n}$), then
$a_c(\theta, \varphi) = \frac{1}{1 + \tan^2 \frac{n\varphi}{2} \frac{\cos^2 f_c(\theta)}{\cos^2 g_c(\theta)}}$.
\begin{figure}
\centering
\includegraphics[width=6cm]{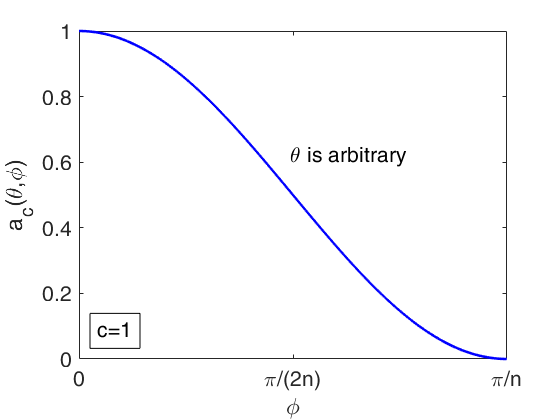}
\includegraphics[width=6cm]{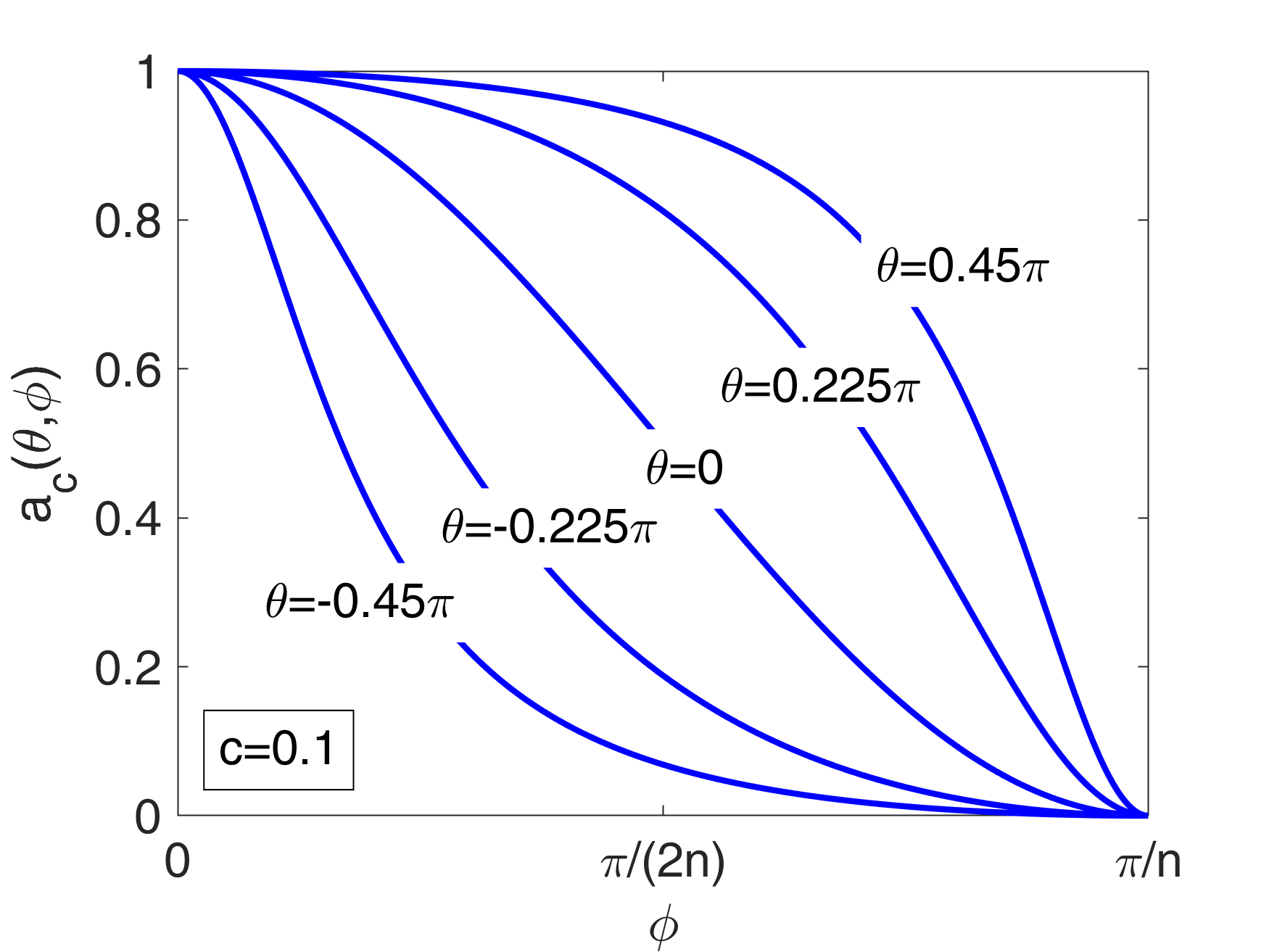}
\caption{Diagram of $a_c(\theta,\varphi)$ versus $\varphi$ for two different values of $c$. If $c=1$ then $a_c=\cos(n\varphi)$ whereas for other values of $c$ and $\theta\neq 0$, it becomes somewhat distorted.}
\label{fig:ac}
\end{figure}
\begin{figure}
\centering
\includegraphics[width=8cm]{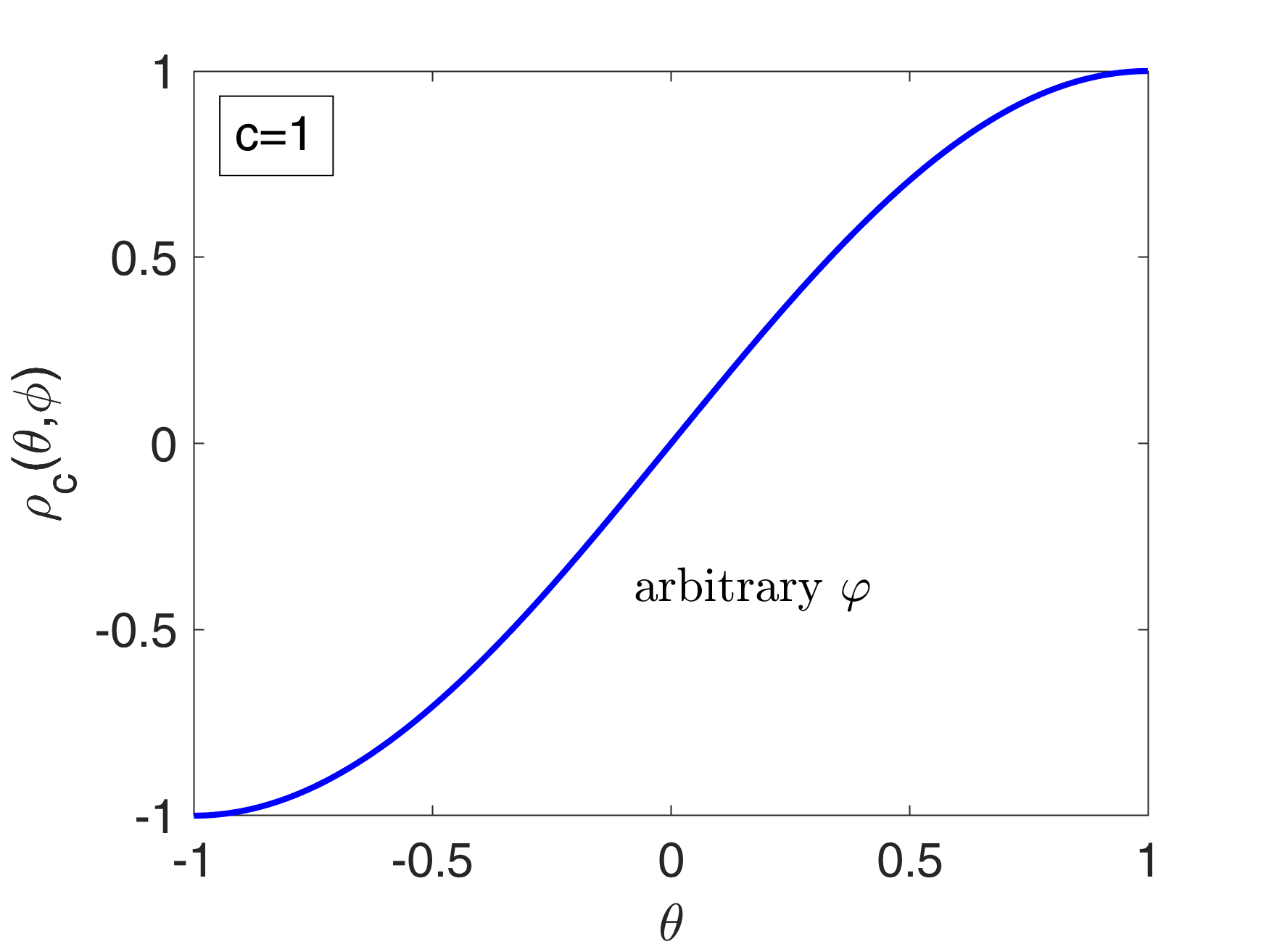}
\includegraphics[width=8cm]{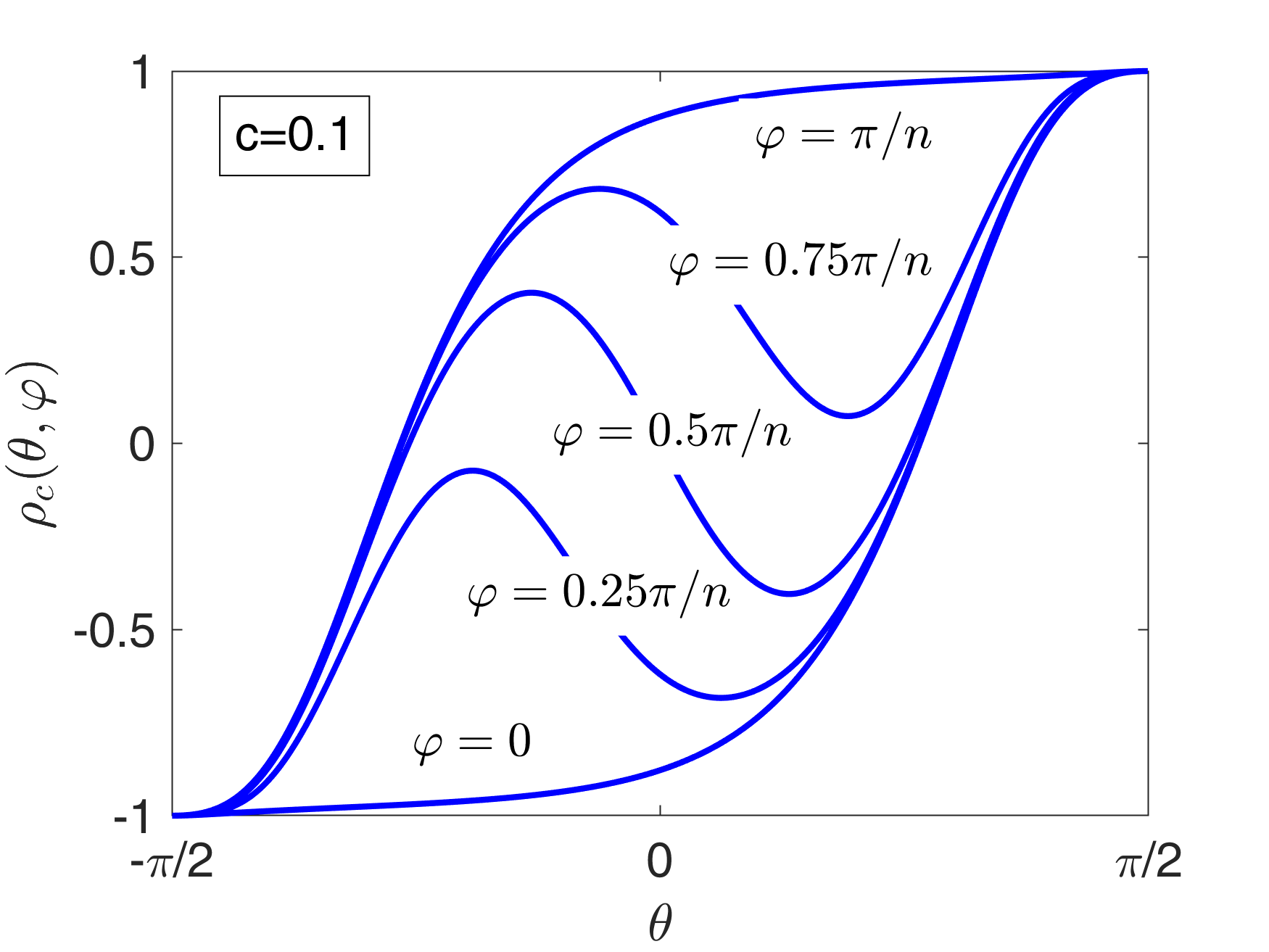}
\includegraphics[width=8cm]{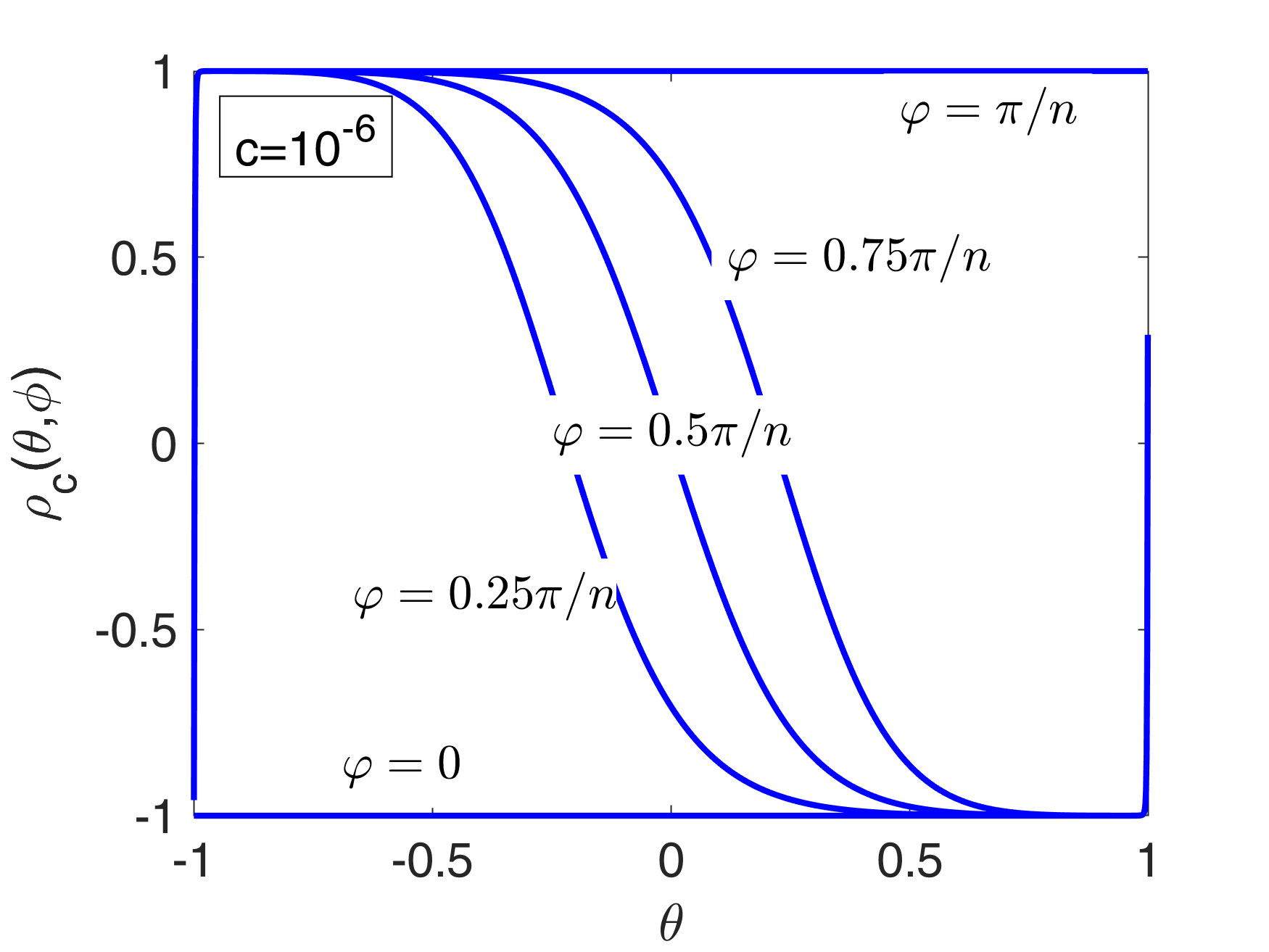}
\caption{Diagram of $\rho_c(\theta,\varphi)$ versus $\theta$ for $c=1,0.1,10^{-6}$. Note that $\rho_c(\theta,0)=f_c(\theta)$ and $\rho_c(\theta,\pi/n)=g_c(\theta)$}
\label{fig:rhoc}
\end{figure}
\begin{figure}
\centering
\includegraphics[width=10cm]{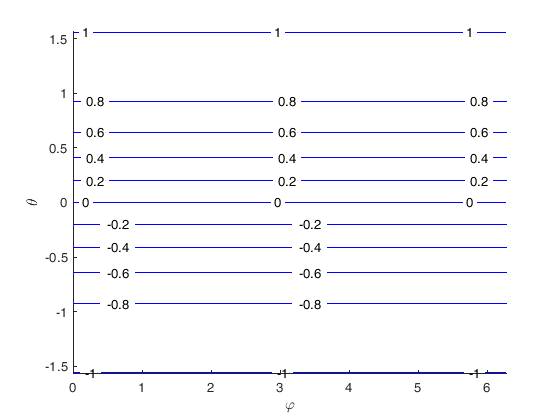}
\includegraphics[width=10cm]{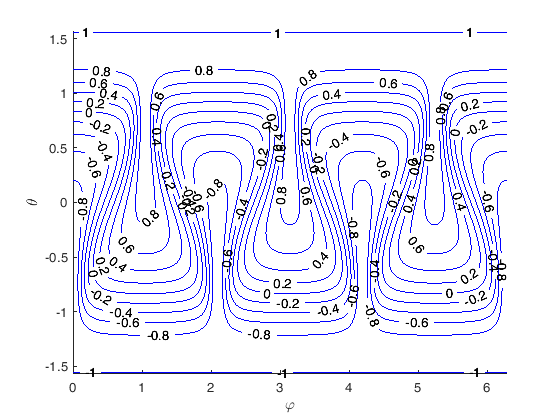}
\caption{Contour plot of $\rho_0(\theta,\varphi)$ for $n=3$ and $c=1$ (top) or $c=0.1$ (bottom). In the first case, $\rho_0$ has the same sign as $\theta$, which yields $H(1,d)>0$ in Eq. \eqref{eq:H} below. In the second case, note large areas with positive values of $\rho_0$ for $\theta<0$ as well as areas with negative values for $\theta>0$. These values explain $H(c,d)<0$ for sufficiently small values of $c$ and $d$.}
\label{fig:rho0contour}
\end{figure}
Using polar coordinates on the unit sphere $\S^2$, we define the function $\rho_c : \S^2 \to \Re$ by
\begin{equation}\label{eq:rho}
\rho_c(u) =  a_c(\theta, \varphi) \sin f_c(\theta) + (1 - a_c(\theta, \varphi)) \sin g_c(\theta),
\end{equation}
for any $u = (\cos \theta \cos \varphi, \cos \theta \sin \varphi, \sin \theta ) \in \S^2$ 
(see Fig. \ref{fig:rhoc} and Fig. \ref{fig:rho0contour}) and set
\begin{equation}\label{eq:R}
R_{c,d}(u) = 1 + d \rho_c(u).
\end{equation}
Observe that by Remark~\ref{rem:elemprop}, both $\rho_c$ and $R_{c,d}$ are well-defined at the two poles, with parameters $\theta = \pm \frac{\pi}{2}$.

Finally, we define the star-shaped set $K_n(c,d)$ (see Fig. \ref{fig:K}) as
\[
K_n(c,d) = \left\{ \lambda u : u \in \S^2, 0 \leq \lambda \leq R_{c,d}(u) \right\},
\]
and note that by definition, for any value of the parameters we have $0 \leq a_c(\theta, \varphi) \leq 1$, and thus, $\rho_c(u)$ is a convex combination of $f_c(\theta)$ and $g_c(\theta)$. This, by Lemma~\ref{lem:Fproperties}, implies that $-1 \leq \rho_c(u) \leq 1$ and $1-d \leq R_{c,d}(u) \leq 1+d$. Thus, $K_n(c,d)$ exists for all $0 \leq d < 1$. 

\begin{figure}
\centering
\includegraphics[width=\columnwidth]{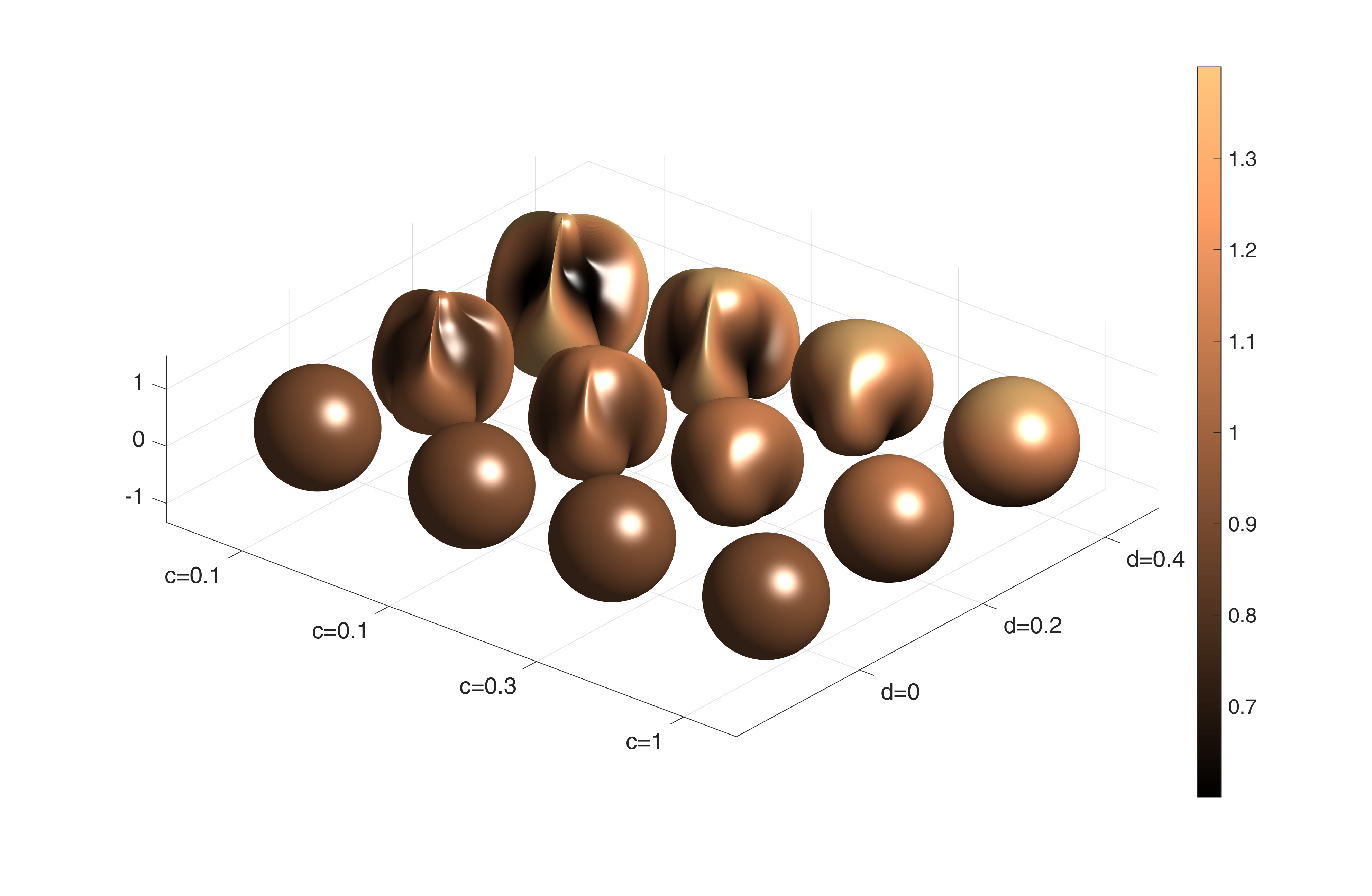}
\caption{The set $K_n(c,d)$ for some values of $c$ and $d$.}
\label{fig:K}
\end{figure}

\subsection{Proof of properties (A)-(C)}\label{subsec:A-C}
In this subsection we prove that the properties described in (A)-(C) are satisfied for $K_n(c,d)$ for all $0 < d < 1$ and $0 < c \leq 1$.

First, observe that by its definition, the symmetry group of $K_n(c,d)$ is the group $D_n$, and thus, (A) is satisfied.
To show (B), first we prove Lemma~\ref{lem:limits}, which plays a crucial role in the proof of (B).

\begin{lem}\label{lem:limits}
Let $\theta_0 \in \left\{ \frac{\pi}{2}-0, -\frac{\pi}{2}+0 \right\}$. Then, as $\theta \to \theta_0$, we have
\[
\frac{(\rho_c)'_{\theta}}{\cos \theta} \to 1, \hbox{ and } \frac{(\rho_c)'_{\varphi}}{\cos^2 \theta}, \frac{(\rho_c)''_{\theta \varphi}}{\cos \theta}, \frac{(\rho_c)'_{\varphi \varphi}}{\cos^2 \theta} \to 0.
\]
Furthermore,
\[
\lim_{\theta \to \frac{\pi}{2}-0} (\rho_c)''_{\theta \theta} = - \lim_{\theta \to -\frac{\pi}{2}+0} (\rho_c)''_{\theta \theta} = -1.
\]
\end{lem}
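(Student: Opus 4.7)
The plan is to derive asymptotic expansions of $\rho_c(\theta,\varphi)$ in $\psi := \pi/2 - \theta$ (near the north pole), with coefficients depending on $\varphi$, and to read off the five limits by inspection. The south-pole case will be entirely analogous via $\psi := \theta + \pi/2$ and the companion values $(f_c)''_{++}(-\pi/2) = -\tfrac{2}{\pi c}$, $(g_c)''_{++}(-\pi/2) = \tfrac{2(1-c)}{\pi c(1+c)}$ from Remark~\ref{rem:elemprop}.

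First I would use Remark~\ref{rem:elemprop} to Taylor expand
$$f_c(\tfrac{\pi}{2} - \psi) = \tfrac{\pi}{2} - \psi + A_f\psi^2 + O(\psi^3), \qquad g_c(\tfrac{\pi}{2} - \psi) = \tfrac{\pi}{2} - \psi + A_g\psi^2 + O(\psi^3),$$
with $A_f = -\tfrac{1-c}{\pi c(1+c)}$, $A_g = \tfrac{1}{\pi c}$, and then pass to
$$\sin f_c = 1 - \tfrac{\psi^2}{2} + O(\psi^3), \quad \sin g_c = 1 - \tfrac{\psi^2}{2} + O(\psi^3), \quad \cos^2 f_c,\ \cos^2 g_c = \psi^2 + O(\psi^3).$$
Setting $A = \cos^2(n\varphi/2)$, $B = \sin^2(n\varphi/2)$, the $\psi^2$ factors in~(\ref{eq:a}) cancel between numerator and denominator, yielding $a_c(\theta,\varphi) = A + O(\psi)$ uniformly in $\varphi$. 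Combining this with $\sin f_c - \sin g_c = O(\psi^3)$ gives
$$\rho_c(\theta,\varphi) = \sin g_c + a_c(\sin f_c - \sin g_c) = 1 - \tfrac{\psi^2}{2} + O(\psi^3) = \sin\theta + O(\psi^3),$$
with the remainder uniform in $\varphi$.

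The key observation is that in this expansion the $\varphi$-dependence enters only at order $\psi^3$. Differentiating (\ref{eq:a})--(\ref{eq:rho}) directly and tracking orders, I would then obtain $(\rho_c)'_\theta = \psi + O(\psi^2)$ and $(\rho_c)''_{\theta\theta} = -1 + O(\psi)$, while $(\rho_c)'_\varphi = O(\psi^3)$, $(\rho_c)''_{\theta\varphi} = O(\psi^2)$, $(\rho_c)''_{\varphi\varphi} = O(\psi^3)$. Dividing by $\cos\theta = \psi + O(\psi^3)$ or by $\cos^2\theta = \psi^2 + O(\psi^4)$ as appropriate, all five limits claimed in the lemma drop out.

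The main technical obstacle will be justifying the uniformity in $\varphi$ of the error terms, especially in the estimates of $\partial_\varphi^k a_c$. This reduces to a lower bound of the form $A\cos^2 f_c + B\cos^2 g_c \geq c'\psi^2$ with $c'>0$ independent of $\varphi$, which is immediate from $A+B=1$ together with $\cos^2 f_c, \cos^2 g_c \geq \tfrac{1}{2}\psi^2$ for small $\psi$. Once this is in hand, $a_c$ extends to a smooth function of $(\psi,\varphi)$ in a neighborhood of the pole with bounded $\varphi$-derivatives, so the term-by-term differentiation performed above is legitimate and the proof concludes.
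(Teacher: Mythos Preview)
Your proposal is correct and rests on the same structural decomposition as the paper: write $\rho_c = \sin g_c + \Psi$ with $\Psi = a_c(\sin f_c - \sin g_c)$, verify that $\sin g_c$ alone already produces the claimed limits $(\rho_c)'_\theta/\cos\theta \to 1$ and $(\rho_c)''_{\theta\theta} \to -1$, and then show that $\Psi$ and the relevant quotients of its derivatives all vanish at the pole. The paper carries out this last step by computing each partial derivative of $\Psi$ explicitly and evaluating the limits one at a time with L'Hospital's rule and trigonometric identities. Your route is a bit more economical: by observing that $\cos^2 f_c,\cos^2 g_c = \psi^2(1+O(\psi))$ you see that $a_c$ extends smoothly across $\psi=0$, and since $\sin f_c - \sin g_c = (A_f-A_g)\psi^3 + O(\psi^4)$ the whole of $\Psi$ is $\psi^3$ times a function smooth in $(\psi,\varphi)$, which dispatches all five limits simultaneously. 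The lower bound $A\cos^2 f_c + B\cos^2 g_c \geq c'\psi^2$ that you flag is exactly what is needed to make this rigorous, and your justification via $A+B=1$ is correct. Either approach works; yours trades the paper's explicit but repetitive limit computations for a single smoothness observation.
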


\begin{proof}
We prove Lemma~\ref{lem:limits} only for $\theta_0 = \frac{\pi}{2}-0$, since if $\theta_0 = -\frac{\pi}{2}+0$, then a similar consideration can be applied.

First, observe that by definition, $\rho_c (\theta, \varphi) = a_c(\theta, \varphi) (\sin f_c - \sin g_c) + \sin g_c$.
Furthermore, $g_c$ is independent of $\varphi$, and by Remark~\ref{rem:elemprop} an elementary computation shows that if $\theta \to \theta_0 \left( =\frac{\pi}{2}-0 \right)$, then $\frac{ (\sin g_c)'}{\cos \theta} \to 1$ and $(\sin g_c)'' \to -1$. Thus, letting $\Psi(\theta,\varphi) = \rho_c (\theta, \varphi) - \sin g_c = a_c ( \theta, \varphi) (\sin f_c - \sin g_c)$, it is sufficient to show that as $\theta \to \theta_0$, we have
\[
\frac{\Psi'_{\theta}}{\cos \theta}, \frac{\Psi'_{\varphi}}{\cos^2 \theta}, \frac{\Psi''_{\theta \varphi}}{\cos \theta}, \frac{\Psi'_{\varphi \varphi}}{\cos^2 \theta}, \Psi''_{\theta \theta} \to 0.
\]
Since the computations to prove these limits are fairly tedious and follow similar ideas, we present the proof of only the first one.

Applying the elementary differentiation rules, we obtain
\[
(a_c)'_{\theta}(\theta,\varphi) = \frac{2 \cos^2 \frac{n \varphi}{2} \sin^2 \frac{n \varphi}{2} \left( \tan g_c \cdot (g_c)'- \tan f_c \cdot (f_c)' \right) }{\left( \cos^2 \frac{n \varphi}{2} \cdot \frac{\cos f_c}{\cos g_c} + \sin^2 \frac{n \varphi}{2} \cdot \frac{\cos g_c}{\cos f_c}  \right)^2} .
\] 
By L'Hospital's rule, we have $\lim_{\theta \to \theta_0} \frac{\cos f_c}{\cos g_c} = \lim_{\theta \to \theta_0} \frac{\cos f_c}{\cos \frac{f_c+g_c}{2}} = 1$.
Thus, the denominator of $(a_c)'_{\theta}$ tends to $1$ as $\theta \to \theta_0$. On the other hand, since $\sin f_c - \sin g_c= 2 \sin \frac{f_c-g_c}{2} \cos \frac{f_c+g_c}{2}$ and $\frac{\sin \frac{f_c-g_c}{2}}{\cos \theta} \to 0$ as $\theta \to \theta_0$, the fact that
\[
\left( \tan g_c \cdot (g_c)'- \tan f_c \cdot (f_c)' \right) (\sin f_c - \sin g_c) =
\]
\[
2 \sin \frac{f_c-g_c}{2} \left( \sin g_c \cdot \frac{\cos \frac{f_c+g_c}{2}}{ \cos g_c}\cdot g_c' - \sin f_c \cdot \frac{\cos \frac{f_c+g_c}{2}}{ \cos f_c}\cdot f_c' \right)
\]
yields that $\frac{(a_c)'_{\theta} (\sin f_c - \sin g_c)}{\cos \theta} \to 0$. Thus, the desired limit follows from $\Psi'_{\theta} = (a_c)'_{\theta} (\sin f_c - \sin g_c) + a_c (\cos f_c \cdot (f_c)' - \cos g_c \cdot (g_c)')$.
\end{proof}

Now we can prove property (B) via the following statement:
\begin{lem}\label{lem:C2}
For any $0 < c < 1$ and $0 < d < 1$, the function $R_{c,d} : \S^2 \to \Re$ is $C^2$-class differentiable.
\end{lem}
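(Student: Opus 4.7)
The plan is to reduce the statement to showing that $\rho_c : \S^2 \to \Re$ is $C^2$, since $R_{c,d}=1+d\rho_c$ with $d$ constant. First I would handle $\S^2$ minus the two poles $u_\pm=(0,0,\pm 1)$: on this open set $(\theta,\varphi)\in(-\pi/2,\pi/2)\times\Re/2\pi\Z$ is a smooth chart, and in these coordinates $f_c,g_c$ are smooth by Lemma~\ref{lem:Fproperties}(a) and (\ref{eq:f}); moreover the denominator in (\ref{eq:a}) is bounded below by $\min\{\cos^2 f_c(\theta),\cos^2 g_c(\theta)\}>0$. Thus $a_c$ is smooth, and hence $\rho_c=a_c\sin f_c+(1-a_c)\sin g_c$ is smooth on $\S^2\setminus\{u_+,u_-\}$.

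The substantive content is $C^2$-regularity at the poles; I would treat $u_+$, the case of $u_-$ being symmetric. On a neighborhood of $u_+$ I would use the Cartesian chart $u\mapsto(x,y):=(u_1,u_2)$, under which $r:=\sqrt{x^2+y^2}=\cos\theta$ and $(x,y)=(r\cos\varphi,r\sin\varphi)$. A brief continuity argument using Remark~\ref{rem:elemprop} shows $\sin f_c(\theta),\sin g_c(\theta)\to 1$ as $\theta\to\pi/2^-$; since $0\leq a_c\leq 1$, we get $\rho_c(u)\to 1$ as $u\to u_+$, so setting $\rho_c(u_+):=1$ gives a continuous extension. For first derivatives I would use the chain rule to obtain
\[
\partial_x\rho_c = -\cos\varphi\cdot\frac{\cos\theta}{\sin\theta}\cdot\frac{(\rho_c)'_\theta}{\cos\theta}-\sin\varphi\cdot\cos\theta\cdot\frac{(\rho_c)'_\varphi}{\cos^2\theta},
\]
with an analogous formula for $\partial_y\rho_c$. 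By Lemma~\ref{lem:limits} the two polar-derivative quotients are bounded (with limits $1$ and $0$), while $\cos\theta/\sin\theta$ and $\cos\theta$ tend to $0$ uniformly in $\varphi$; so $\partial_x\rho_c,\partial_y\rho_c\to 0$, giving continuous first derivatives at $u_+$.

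For second derivatives, I would apply $\partial_x,\partial_y$ once more and expand. The result is a finite sum of terms, each a product of a bounded trigonometric function of $(\theta,\varphi)$, a non-negative power of $\cos\theta$, and one of the five polar-derivative quotients
\[
\frac{(\rho_c)'_\theta}{\cos\theta},\quad \frac{(\rho_c)'_\varphi}{\cos^2\theta},\quad \frac{(\rho_c)''_{\theta\varphi}}{\cos\theta},\quad \frac{(\rho_c)''_{\varphi\varphi}}{\cos^2\theta},\quad (\rho_c)''_{\theta\theta}
\]
whose limits are furnished by Lemma~\ref{lem:limits}. Every singular factor $\cos^{-k}\theta$ arising from differentiating the chart transition is matched by a polar partial of $\rho_c$ vanishing at the same order, so $\partial_{xx}\rho_c,\partial_{xy}\rho_c,\partial_{yy}\rho_c$ each have a limit as $u\to u_+$ that is independent of the angle of approach, yielding continuous extensions. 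Combined with smoothness away from the poles and the symmetric analysis at $u_-$, this gives $\rho_c\in C^2(\S^2)$, hence $R_{c,d}\in C^2(\S^2)$.

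The main obstacle is the bookkeeping in the second-derivative step: one must verify that each new singular factor is compensated by exactly the polar-partial quotient that Lemma~\ref{lem:limits} controls, and that the limits there hold uniformly in $\varphi$. The uniformity is implicit in the proof of Lemma~\ref{lem:limits} (the error estimates there depend only on how close $\theta$ is to $\pm\pi/2$, not on $\varphi$), so the whole plan reduces to the smoothness observation on $\S^2\setminus\{u_+,u_-\}$ together with a routine, if tedious, chain-rule calculation at the poles.
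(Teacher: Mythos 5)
Your proposal is correct and follows essentially the same route as the paper: reduce to $\rho_c$, note smoothness away from the poles, pass to the Cartesian chart $(x,y)$ near each pole, and use the chain rule together with Lemma~\ref{lem:limits} to show that every singular factor from the chart transition is cancelled by the corresponding vanishing polar partial. The only difference is that the paper additionally computes the one-sided difference quotients at $u_N$ directly (e.g.\ $((\rho_c)'_x)_{+}(u_N)=\lim_{\theta\to\pi/2-0}\frac{\rho_c(u)-1}{\cos\theta}$) to confirm the derivatives exist there and agree with the limits, whereas you invoke the standard mean-value-theorem fact that a continuous extension of the partials suffices --- both are valid.
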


\begin{proof}
Clearly, by (\ref{eq:R}), it is sufficient to prove that the function $\rho_c : \S^2 \to \Re$ is $C^2$-class differentiable.
Furthermore, since the function $\rho_c$ is defined by smooth functions of the polar coordinates $\theta$ and $\varphi$ in (\ref{eq:rho}), we need to prove its differentiability properties at the two poles $u_N= (0,0,1), u_S = (0,0,-1) \in \S^2$, associated to the parameter values $\theta = \frac{\pi}{2}$ and $\theta = - \frac{\pi}{2}$ of the polar coordinate system, respectively. We do it only for the North Pole $u_N$, as the proof in the other case is analogous.

Let $U$ be a neighborhood of $u_N$ contained in the open hemisphere defined by the inequality $\theta > 0$ (i.e. the northern hemisphere), and let $\pi_{xy}: U \to H_{xy}$ denote the orthogonal projection onto the $(x,y)$-plane $H_{xy}$. Then $\pi_{xy}$ is a bijection between $U$ and $V=\pi_{xy}(U)$, and for any $u \in U$ with $u=(\cos \theta \cos \varphi, \cos \theta \sin \varphi, \sin \theta)$, the image of $u$ is
\[
\pi_{xy}(u) = (\cos \theta \cos \varphi, \cos \theta \sin \varphi , 0).
\]
For simplicity, we identify the $(x,y)$-plane $H_{xy}$ by $\Re^2$, and ignore the $z$-coordinate of the above point.
Let $h : \Re^2 \to \Re^2$ be defined by $h(\theta, \varphi) = (\cos \theta \cos \varphi, \cos \theta \sin \varphi)$, and note that the Jacobian of $h$ is 
\[
J_h(\theta,\varphi) = \left[ \begin{array}{cc} - \sin \theta \cos \varphi & -\sin \theta \sin \varphi \\ - \cos \theta \sin \varphi & \cos \theta \cos \varphi \end{array} \right],
\]
which is invertible if $\theta \neq \frac{k \pi}{2}$ for some $k \in \mathbb{Z}$, and in this case its inverse is
\begin{equation}\label{eq:JGinv}
J_h^{-1}(\theta,\varphi) =
\left[
\begin{array}{cc}
-\frac{\cos \varphi}{\sin \theta} & -\frac{\sin \varphi}{\cos \theta} \\ -\frac{\sin \varphi}{\sin \theta} & \frac{\cos \varphi}{\cos \theta}
\end{array}
\right]
\end{equation}
Thus, if we reparameterize $U$ with the $(x,y)$-coordinates of the points in $\pi_{xy}(U)$, then, by the chain rule, for any $u \in U \setminus \{ u_N \}$ with
$u(\cos \theta \cos \varphi, \cos \theta \sin \varphi, \sin \theta)$, the partial derivatives of $\rho_c$, as functions of $\theta$ and $\varphi$, can be computed by
\begin{equation}\label{eq:rhodiff}
\left[ \begin{array}{c} (\rho_c)'_x (u) \\ (\rho_c)'_y (u) \end{array} \right] = J_h^{-1}(\theta,\varphi) \left[ \begin{array}{c} (\rho_c)'_{\theta}(u) \\ (\rho_c)'_{\varphi}(u) \end{array} \right] .
\end{equation}
To prove the $C^1$-class differentiability of $\rho_c$ at $u_N$, we show that as $u \to u_N$, we have $(\rho_c)'_x(u), (\rho_c)'_y(u) \to 0$.
Indeed, from (\ref{eq:rhodiff}), we have
\begin{equation}\label{eq:rhodiffexpressed}
(\rho_c)'_x(u) = - \frac{\cos \varphi}{\sin \theta} (\rho_c)'_{\theta}(u) - \frac{\sin \varphi}{\cos \theta} (\rho_c)'_{\varphi}(u),
\end{equation}
and the limit $\lim_{\theta \to \frac{\pi}{2}-0} (\rho_c)'_x(u) = 0$ follows from Lemma~\ref{lem:limits}; the fact that $(\rho_c)'_y(u) \to 0$ as $u \to u_N$ follows similary. On the other hand, by the definition of one-sided partial derivatives, we have
\[
((\rho_c)'_x)_{+}(u_N) = \lim_{\theta \to \frac{\pi}{2}-0} \frac{\rho_c(u)-1}{\cos \theta} = 0
\]
by Lemma~\ref{lem:limits}, where $u=(\cos \theta, 0, \sin \theta)$, and we may obtain similarly that $((\rho_c)'_x)_{-}(u_N)=((\rho_c)'_y)_{+}(u_N)=((\rho_c)'_y)_{-}(u_N)=0$. This yields that $\rho_c$ is $C^1$-class differentiable at $u_N$.

To examine the second partial derivatives of $h$, we can apply the same idea using the partial derivatives $(\rho_c)'_x$ and $(\rho_c)'_y$ playing the role of $\rho_c$.
In particular, for any $u \in U \setminus \{ u_N \}$ with $u=(\cos \theta \cos \varphi, \cos \theta \sin \varphi, \sin \theta)$, we have
\[
\left[ \begin{array}{c} (\rho_c)''_{xx}(u) \\ (\rho_c)''_{xy}(u) \end{array} \right] = J_h^{-1}(\theta,\varphi) \left[ \begin{array}{c} (\rho_c)''_{x\theta}(u) \\ (\rho_c)''_{x \varphi}(u) \end{array} \right],
\]
where $(\rho_c)''_{x\theta}(u)$ and $(\rho_c)''_{x \varphi}(u)$ can be computed, as functions of $\theta$ and $\varphi$, by differentiating the right-hand side of (\ref{eq:rhodiffexpressed}).
From this, we obtain that
\[
(\rho_c)''_{xx}(u) = \left( - \frac{\cos^2 \varphi \cos \theta}{\sin^3 \theta} - \frac{\sin^2 \varphi}{\sin \theta \cos \theta} \right) (\rho_c)'_{\theta}(u) +
\frac{2\cos \varphi \sin \varphi}{\cos^2 \theta} (\rho_c)'_{\varphi}(u) +
\]
\[
+ \frac{\cos^2 \varphi}{\sin^2 \theta} (\rho_c)''_{\theta \theta}(u) + \frac{2\cos \varphi \sin \varphi}{\sin \theta \cos \theta} (\rho_c)''_{\theta \varphi}(u) + \frac{\sin^2 \varphi}{\sin^2 \theta} (\rho_c)''_{\varphi \varphi}(u) .
\]
Using this formula, Lemma~\ref{lem:limits} yields that $\lim_{u \to u_N} (\rho_c)''_{xx}(u) = -1$. On the other hand, using the definition of partial derivatives
and the fact that $(\rho_c)'_x = 0$, if $u=(\cos \theta, 0, \sin \theta)$, then Lemma~\ref{lem:limits} implies that
\[
(((\rho_c)'_x)'_x)_{+}(u_N) = \lim_{\theta \to \frac{\pi}{2}-0} \frac{(\rho_c)'_x(u)}{\cos \theta} = -1,
\]
and $(((\rho_c)'_x)'_x)_{-}(u_N) = -1$ follows similarly. This yields that $(\rho_c)''_{xx}$ exists and is continuous at $u_N$. To prove the same for the functions
$(\rho_c)''_{xy}$ and $(\rho_c)''_{yy}$, we may apply a similar consideration, and obtain, in particular, that $(\rho_c)''_{xy}(u_N)=0$ and $(\rho_c)''_{yy}(u_N)=-1$.
\end{proof}

\begin{rem}
\label{rem:curvature}
Our computation also shows that the surface defined in polar form by $R_{c,d} : \S^2 \to \Re$ has umbilic points at the two poles, and the principal curvatures at $u_N$ and $u_S$ are equal to $\frac{1+2d}{\sqrt{1+d}}$ and $\frac{1-2d}{\sqrt{1-d}}$, respectively. 
\end{rem}

Finally, we prove (C) in Lemma~\ref{lem:equilibria}.

\begin{lem}\label{lem:equilibria}
For any $0 < c,d < 1$, the only equilibrium points of $K_n(c,d)$ with respect to $o$ are $R_{c,d}(u_N) u_N$ and $R_{c,d}(u_S) u_S$.
\end{lem}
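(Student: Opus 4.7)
The plan is to rephrase the equilibrium condition as a critical-point condition for $R_{c,d}$ (equivalently, for $\rho_c$) on $\S^2$, dispatch the two poles by symmetry, and then perform a short case analysis in polar coordinates to rule out every other candidate.

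First I would use that $K_n(c,d)$ is star-shaped with respect to $o$ and has $C^2$ radial function $R_{c,d}=1+d\rho_c$ (Lemma~\ref{lem:C2}). A boundary point $R_{c,d}(u)u$ is an equilibrium with respect to $o$ iff the tangent plane of $\bd K_n(c,d)$ there is orthogonal to $u$, which happens iff $u$ is a critical point of $R_{c,d}$ on $\S^2$; since $d>0$ this is the same as asking $u$ to be critical for $\rho_c$. For the poles, both $u_N$ and $u_S$ are fixed by the whole $D_n$-action, so the spherical gradient of $\rho_c$ at $u_N$, a vector in $T_{u_N}\S^2\cong\Re^2$, must be fixed by the induced $D_n$-action. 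Because $n\geq 2$, this action contains a nontrivial rotation whose only fixed vector is $0$, forcing the gradient to vanish at $u_N$ and, by the same argument, at $u_S$. Hence these two points yield equilibria.

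It remains to show $\rho_c$ has no other critical points on $\S^2\setminus\{u_N,u_S\}$, where polar coordinates $(\theta,\varphi)$ with $\theta\in(-\pi/2,\pi/2)$ are regular. Since $f_c$ and $g_c$ depend only on $\theta$, one has
\[
(\rho_c)'_\varphi = (a_c)'_\varphi\bigl(\sin f_c(\theta)-\sin g_c(\theta)\bigr),
\]
and a direct differentiation of~(\ref{eq:a}) yields that $(a_c)'_\varphi$ factors as $-(n/2)\sin(n\varphi)\cos^2 f_c(\theta)\cos^2 g_c(\theta)$ divided by a strictly positive denominator. Lemma~\ref{lem:Fproperties}(b,c) combined with $g_c(\theta)=-f_c(-\theta)$ tells us that $f_c,g_c$ are strictly increasing bijections $[-\pi/2,\pi/2]\to[-\pi/2,\pi/2]$, so their cosines are strictly positive on the open strip; thus $(a_c)'_\varphi$ vanishes exactly on the meridians $\varphi=k\pi/n$, $k\in\Z$. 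On the other hand, $\sin f_c(\theta)=\sin g_c(\theta)$ inside the strip reduces to $f_c(\theta)=g_c(\theta)$, since the alternative $f_c+g_c=\pi$ would force $\theta=\pi/2$.

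I would finish by checking $(\rho_c)'_\theta$ on each of the two remaining candidate loci. On a meridian $\varphi=k\pi/n$ the weight $a_c$ equals $1$ or $0$ according to the parity of $k$, so $\rho_c$ restricts there to $\sin f_c(\theta)$ or $\sin g_c(\theta)$, both strictly increasing in $\theta$, whence $(\rho_c)'_\theta\neq 0$. On a latitude $\theta=\theta_0\in(-\pi/2,\pi/2)$ with $f_c(\theta_0)=g_c(\theta_0)$, the term $(a_c)'_\theta(\sin f_c-\sin g_c)$ drops out of $(\rho_c)'_\theta$, leaving
\[
\cos f_c(\theta_0)\bigl[a_c f'_c(\theta_0)+(1-a_c)g'_c(\theta_0)\bigr],
\]
which is strictly positive since $\cos f_c(\theta_0)>0$ and $f'_c,g'_c>0$ by Lemma~\ref{lem:Fproperties}(b). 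Therefore no additional critical point exists, and the two poles give all equilibria. I expect the principal subtlety to be recognising the clean factorisation of $(\rho_c)'_\varphi$ together with the observation that on both resulting loci the sign of $(\rho_c)'_\theta$ can be read off using only the monotonicity of $f_c$ and $g_c$, with no finer information required.
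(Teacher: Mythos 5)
Your proposal is correct and follows essentially the same route as the paper: the poles are handled by the $D_n$-symmetry, and away from them one factors $(\rho_c)'_\varphi=(a_c)'_\varphi(\sin f_c-\sin g_c)$, locates the zero set, and verifies $(\rho_c)'_\theta>0$ there using the strict monotonicity of $F_c$ from Lemma~\ref{lem:Fproperties}(b). You are in fact slightly more thorough than the paper, which only discusses the vanishing of the factor $(a_c)'_\varphi$ (i.e.\ the meridians $\varphi=k\pi/n$); your additional treatment of the locus $\sin f_c(\theta)=\sin g_c(\theta)$ closes a case the paper leaves implicit (it is in fact empty for $c\in(0,1)$, since $F_c(x)<x$ on $(0,1)$ gives $f_c<g_c$ on the open strip, but your direct computation of $(\rho_c)'_\theta$ there disposes of it without needing that observation).
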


\begin{proof}
Note that by the symmetry of $K_n(c,d)$ (and also by the computations in the proof of Lemma~\ref{lem:C2}), the points $R_{c,d}(u_N) u_N$ and $R_{c,d}(u_S) u_S$ are equilibrium points with respect to $o$. To show that no other point satisfies this property it is sufficient to show that if $- \frac{\pi}{2} < \theta < \frac{\pi}{2}$ and $u=(\cos \theta \cos \varphi, \cos \theta \sin \varphi, \sin \theta)$, then $(\rho_c)'_{\theta}(u) \neq 0$ or $(\rho_c)'_{\varphi}(u) \neq 0$.

First, we investigate $(\rho_c)'_{\varphi}(u)$.
Since $\rho_c(u) = a_c (\theta,\varphi) (\sin f_c - \sin g) + \sin g_c$, we have $(\rho_c)'_{\varphi}(u) = (a_c)'_{\varphi}(\theta,\varphi) (\sin f_c - \sin g_c)$.
On the other hand,
\[
(a_c)'_{\varphi} (\theta,\varphi) = \frac{-n \sin \frac{n \varphi}{2} \cos \frac{n\varphi}{2} \cos^2 f_c \cos^2 g_c}{\left( \cos^2 \frac{n\varphi}{2} \cos^2 f_c + \sin^2 \frac{n\varphi}{2} \cos^2 g_c \right)^2},
\]
which is zero only if $\sin \frac{n \varphi}{2} \cos \frac{n\varphi}{2} = \frac{1}{2} \sin (n \varphi) = 0$, implying that $\varphi$ is an integer multiple of $\frac{\pi}{n}$.

Assume that $\varphi$ is an integer multiple of $\frac{\pi}{n}$. This yields that $\rho_c(u) = \sin f_c(u)$ or $\rho_c(u) = \sin g_c(u)$. On the other hand, since $x \mapsto \sin x$, $f_c$ and $g_c$ are strictly increasing on $\left[-\frac{\pi}{2}, \frac{\pi}{2} \right]$ (cf. Lemma~\ref{lem:Fproperties} and the definition of $f_c$ in (\ref{eq:f})), in this case $(\rho_c)'_{\theta}(u) > 0$ follows.
\end{proof}

\subsection{Proving properties (D)-(F) for a suitable element of $\mathcal{F}_n$}\label{subsec:properties}

In this subsection we show that the properties in (D)-(F) are satisfied (simultaneously) for some star-shaped body $K_n(c,d)$ with a suitable choice of $c$ and $d$.
For this purpose, we set some arbitrary positive constant $\varepsilon > 0$, and observe that the definition of $\rho_c$ implies that $\max \{ \rho_c(u) : u \in \S^2 \} = 1$, and hence, if $0 < d < \varepsilon$, then $(1-d) \BB^3 \subseteq K_n(c,d) \subseteq  (1+d) \BB^3$ and $d_H(K_n(c,d),\BB^3) < \varepsilon$.

First, in Lemma~\ref{lem:centering} we examine (E). 

\begin{lem}\label{lem:centering}
There exist constants $0 < c_1 < c_2 < 1$, $0 < d_0 < 1$ and a function $F:(0,d_0) \to [c_1,c_2]$ such that for any $d \in (0,d_0)$.
the center of mass of $K_n(F(d),d)$ is $o$.
\end{lem}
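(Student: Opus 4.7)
The plan is to reduce the vanishing of the center of mass to a single scalar equation using symmetry, then apply the intermediate value theorem in $c$ for each fixed small $d$.

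Since $K_n(c,d)$ has $D_n$ symmetry with $n\ge 2$, its center of mass is fixed by a nontrivial rotation about the $z$-axis and therefore lies on that axis; so it suffices to zero out the $z$-coordinate. Writing volumes in spherical coordinates $(r,\theta,\varphi)$ (with $\theta$ the latitude), one finds that the $z$-coordinate of the center of mass of $K_n(c,d)$ vanishes if and only if
\[
H(c,d) \;:=\; \int_0^{2\pi}\!\!\int_{-\pi/2}^{\pi/2} R_{c,d}^4(\theta,\varphi)\,\sin\theta\cos\theta\,d\theta\,d\varphi \;=\; 0.
\]
Expanding $R_{c,d}^4=(1+d\rho_c)^4=\sum_{k=0}^{4}\binom{4}{k}d^k\rho_c^k$ and noting that the $k=0$ term contributes $0$ since $\sin\theta\cos\theta$ is odd in $\theta$, I would write
\[
H(c,d)\;=\;4d\,I_1(c)\;+\;O(d^2),\qquad I_1(c)\;:=\;\int_0^{2\pi}\!\!\int_{-\pi/2}^{\pi/2}\rho_c(\theta,\varphi)\sin\theta\cos\theta\,d\theta\,d\varphi,
\]
with the $O(d^2)$ error uniform in $c\in(0,1)$ because $|\rho_c|\le 1$ by Lemma~\ref{lem:Fproperties}. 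So it is enough to exhibit $0<c_1<c_2<1$ with $I_1(c_1)<0<I_1(c_2)$.

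For the two sign determinations I would argue as follows. At $c=1$, Lemma~\ref{lem:Fproperties}(e) gives $f_1(\theta)=g_1(\theta)=\theta$, so $\rho_1(u)=\sin\theta$ and $I_1(1)=2\pi\int_{-\pi/2}^{\pi/2}\sin^2\theta\cos\theta\,d\theta=\tfrac{4\pi}{3}>0$, so by continuity $I_1(c_2)>0$ for some $c_2<1$ close to $1$. As $c\to 0^+$, Lemma~\ref{lem:Fproperties}(f) gives $f_c(\theta)\to-\pi/2$ and (by $g_c(\theta)=-f_c(-\theta)$) $g_c(\theta)\to\pi/2$ pointwise on $(-\pi/2,\pi/2)$, hence $\sin f_c\to-1$, $\sin g_c\to 1$. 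A first-order expansion of $F_c$ near $c=0$ yields $\cos f_c(\theta)\sim\pi c\,[1+x^2/(1-x)^2]$ and $\cos g_c(\theta)\sim\pi c\,[1+(1-x)^2/x^2]$ with $x=\theta/\pi+\tfrac12$, so the ratio $\cos^2 f_c/\cos^2 g_c\to x^4/(1-x)^4$, and consequently $a_c(\theta,\varphi)$ converges pointwise to
\[
a_0(\theta,\varphi)\;=\;\frac{\cos^2(n\varphi/2)}{\cos^2(n\varphi/2)+\sin^2(n\varphi/2)\cdot(1-x)^4/x^4}\in[0,1],
\]
which is strictly increasing in $\theta$ whenever $\sin(n\varphi/2)\ne 0$. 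Thus $\rho_c\to 1-2a_0$ a.e.\ on $\S^2$, and dominated convergence gives $I_1(c)\to-2\int a_0\sin\theta\cos\theta\,d\theta\,d\varphi$. Because $a_0$ is monotone in $\theta$ while $\sin\theta\cos\theta$ is odd in $\theta$, the reflection $\theta\mapsto-\theta$ shows this integral is strictly positive, so $I_1(0^+)<0$ and $I_1(c_1)<0$ for some small $c_1>0$.

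With these endpoints fixed, continuous dependence of $\rho_c$ on $c$ (visible from its elementary definition) combined with the uniform bound $|\rho_c|\le 1$ and dominated convergence gives continuity of $c\mapsto H(c,d)$ on $[c_1,c_2]$. The uniform expansion $H(c,d)=4dI_1(c)+O(d^2)$ then ensures $H(c_1,d)<0<H(c_2,d)$ for all $d\in(0,d_0)$ with $d_0$ small enough, and the intermediate value theorem produces the desired $F(d)\in[c_1,c_2]$ with $H(F(d),d)=0$. The main obstacle is the delicate limit analysis as $c\to 0^+$: both $\cos f_c$ and $\cos g_c$ vanish, so the correct limiting value of $a_c$ emerges only after a careful first-order Taylor expansion of $F_c$ around $c=0$; once the explicit form of $a_0$ is in hand, its monotonicity in $\theta$ and hence the sign $I_1(0^+)<0$ follow directly.
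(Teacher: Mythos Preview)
Your proof is correct and follows the same overall architecture as the paper's: reduce to the $z$-moment by $D_n$ symmetry, factor out $d$, determine the sign of the leading coefficient at the two ends of the $c$-interval, and apply the intermediate value theorem. The computations of $I_1(1)=\tfrac{4\pi}{3}$ and of the limit $a_c\to a_0$ as $c\to 0^+$ match the paper's exactly (after the change of variable $x=\theta/\pi+\tfrac12$), and the use of dominated convergence and continuity of $H$ is identical.

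The one genuine difference is how you establish $I_1(0^+)<0$. The paper simply evaluates the limiting integral numerically, obtaining $-2.3168\ldots$, whereas you give an analytic argument: $a_0(\theta,\varphi)$ is increasing in $\theta$ for generic $\varphi$, so pairing $\theta$ with $-\theta$ against the odd weight $\sin\theta\cos\theta$ forces $\int a_0\sin\theta\cos\theta\,d\theta\,d\varphi>0$ and hence $I_1(0^+)=-2\int a_0\sin\theta\cos\theta<0$. This is a nicer, fully rigorous replacement for the numerical step. (A tiny nit: $a_0$ is constant also when $\cos(n\varphi/2)=0$, not only when $\sin(n\varphi/2)=0$; but that is a measure-zero set of $\varphi$ and does not affect the integral argument.) Conversely, the paper does a bit more at the other endpoint, computing $H(1,d)=\tfrac{4\pi}{5}d^2+\tfrac{4\pi}{3}>0$ for \emph{all} $d\in[0,1]$, whereas your $O(d^2)$ expansion only controls small $d$; since the lemma only asks for small $d$, your version suffices.
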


\begin{proof}
We start the proof with the observation that as the symmetry group of $K_n(c,d)$ is $D_n$ for some $n \geq 2$, the center of mass of $K$ lies on the $z$-axis.
Thus, the center of mass of $K_n(c,d)$ is $o$ if and only if the first moment of $K_n(c,d)$ with respect to the $(x,y)$-plane is $0$.

Changing to spherical coordinates, the first moment of $K_n(c,d)$ with respect to the $(x,y)$-plane can be computed as
\begin{equation}\label{eq:firstmoment}
M_{xy}(K_n(c,d)) = \int_{v \in K_n(c,d)} z \, d v = \int_{-\pi/2}^{\pi/2} \int_0^{2\pi} \frac{ \left( 1+d \rho_c \left(\theta, \varphi \right)\right)^4 }{4} \cos \theta \sin \theta \, d \varphi \, d \theta 
\end{equation}

Expanding the integrand, we obtain that the above moment is $M_{xy}(K_n(c,d)) = d H(c,d)$, where
\begin{equation}\label{eq:H}
H(c,d)=\int_{-\pi/2}^{\pi/2}\int_0^{2\pi}  \left( \rho_c(\theta,\varphi)+ \frac{3}{2} d \rho_c^2(\theta,\varphi) + d^2 \rho_c^3(\theta, \varphi) + \frac{d^3}{4} 
\rho_c^4(\theta, \varphi) \right) \sin \theta \cos \theta \, d \varphi \, d \theta .
\end{equation}
Here, since the integrand is continuous on the domain for every $0 < c \leq 1$ and $0 \leq d \leq 1$, $H(c,d)$ is also continuous in this region.

Recall that $f_1(\theta)= g_1(\theta) = \theta$ by Lemma~\ref{lem:Fproperties} and Equation (\ref{eq:f}). Thus, $\rho_1(\theta, \varphi)=\sin \theta$, and for $c=1$ we have
\[
H(1,d) = 2 \pi \int_{-\pi/2}^{\pi/2} (\left( \sin^2 \theta + \frac{3}{2} d \sin^3 \theta + d^2 \sin^4 \theta + \frac{d^3}{4} \sin^5 \theta \right)
\cos \theta \, d \theta = 
\]
\[
= 2 \pi \int_{-1}^1 x^2 + \frac{3}{2} d x^3 + d^2 x^4 + \frac{1}{4} d^3 x^5 \, dx = \frac{4\pi}{5}d^2 + \frac{4\pi}{3},
\]
which is strictly positive if $0 \leq d \leq 1$ (see Fig. \ref{fig:rho0contour}). Thus, by compactness, there is a value $0 < c_2 < 1$, arbitrarily close to $1$, such that
$M_{xy}(K_n(c_2,d)) > 0$ for all $d \in (0,1]$.

Next, we examine the case $d=0$, and we show that if $c > 0$ is sufficiently small, then $H(c,0) < 0$ 
(see also Fig.~\ref{fig:rho0contour}). 
Note that we have
\[
H(c,0) =\int_{-\pi/2}^{\pi/2}\int_0^{2\pi}  \rho_c(\theta,\varphi) \cos \theta \sin \theta  \, d \varphi \, d \theta .
\]
For any $-\frac{\pi}{2} < \theta < \frac{\pi}{2}$, an elementary computation, using the well-known limit $\lim_{x \to 0} \frac{\sin x}{x} = 1$ and (\ref{eq:a}), yields that if $\varphi \neq \frac{(1+2k)\pi}{n}$ for some $k \in \mathbb{Z}$, then
\[
\lim_{c \to 0^+} a_c(\theta,\varphi) = \lim_{c \to 0^+} \frac{1}{1 + \tan^2 \frac{n\varphi}{2} \frac{\cos^2 (g_c(\theta))}{\cos^2 (f_c(\theta))}}  = \frac{\left( \frac{\pi}{2} + \theta\right)^4 \cos^2 \frac{n \varphi}{2}}{\left( \frac{\pi}{2} + \theta\right)^4 \cos^2 \frac{n \varphi}{2} + \left( \frac{\pi}{2} - \theta\right)^4 \sin^2 \frac{n \varphi}{2} },
\]
and a similar computation for $\lim_{c \to 0^+} (1-a_c(\theta,\varphi))$ yields the same limit if $\varphi \neq \frac{2k\pi}{n}$ for some $k \in \mathbb{Z}$. Thus, we have
\[
\lim_{c \to 0^+} a_c(\theta,\varphi) = \frac{\left( \frac{\pi}{2} + \theta\right)^4 \cos^2 \frac{n \varphi}{2}}{\left( \frac{\pi}{2} + \theta\right)^4 \cos^2 \frac{n \varphi}{2} + \left( \frac{\pi}{2} - \theta\right)^4 \sin^2 \frac{n \varphi}{2} }
\]
for any $-\frac{\pi}{2} < \theta < \frac{\pi}{2}$.
This implies that
$\lim_{c \to 0^+} r(\varphi,\theta,c) = (1-2 \lim_{c \to 0^+} a(\varphi,\theta,c)) =
\frac{\left( \frac{\pi}{2} - \theta\right)^4 \sin^2 \frac{n \varphi}{2} - \left( \frac{\pi}{2} + \theta\right)^4 \cos^2 \frac{n \varphi}{2}}{\left( \frac{\pi}{2} + \theta\right)^4 \cos^2 \frac{n \varphi}{2} + \left( \frac{\pi}{2} - \theta\right)^4 \sin^2 \frac{n \varphi}{2} }$ for any $-\frac{\pi}{2} < \theta < \frac{\pi}{2}$. We define this quantity as $\rho_0(\theta,\varphi)$.
Note that $|\rho_c(\theta,\varphi)| \leq 1$ for all values $0 < c \leq 1$, $0 \leq  \varphi \leq 2\pi$, $-\frac{\pi}{2} < \theta < \frac{\pi}{2}$, and the constant function $(\theta,\varphi) \mapsto 1$ is integrable on $[0,2\pi] \times \left[-\frac{\pi}{2}, \frac{\pi}{2} \right]$.
Thus, by Lebesgue's dominated convergence theorem \cite{Evans}, we have
\[
\lim_{c \to 0^+} H(c,0) =\int_{-\pi/2}^{\pi/2}\int_0^{2\pi} \rho_0(\theta,\varphi) \cos \theta \sin \theta  \, d \varphi \, d \theta = -2.3168\ldots,
\]
which is negative. This yields that there are some values $0 < c_1 < c_2$ and $0 < d_0$ such that for any $0 \leq d \leq d_0$, $H(c_1,d) < 0$, and the assertion follows from the continuity of $H$. 
\end{proof}

Our next lemma takes care of Property (D). Recall \cite{Schneider} that $\mathcal{C}^2_+$ denotes the family of convex bodies whose boundary is $C^2$-class differentiable at every point, and has strictly positive Gaussian curvature everywhere.

\begin{lem}\label{lem:convexity}
Let $[c_1,c_2]$ be defined as in Lemma~\ref{lem:centering}. Then there is a value $0 < d^{\star} < 1$ such that for any 
$0 < d < d^{\star}$ and $c \in [c_1,c_2]$, we have $K_n(c,d) \in \mathcal{C}^2_+$. 
\end{lem}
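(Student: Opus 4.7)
The plan is to establish Lemma~\ref{lem:convexity} by a continuity-and-compactness argument, using the fact that at $d=0$ we have $R_{c,0}\equiv 1$, so $K_n(c,0)=\BB^3$ has Gaussian curvature identically $1$ for every $c\in[c_1,c_2]$. The strategy is to show that the Gaussian curvature $\kappa(u;c,d)$ of $\bd K_n(c,d)$ at the boundary point $R_{c,d}(u)u$ is a jointly continuous function of $(u,c,d)$ on the compact set $\S^2\times[c_1,c_2]\times[0,1/2]$, and then conclude by compactness that $\kappa$ stays uniformly bounded away from $0$ for all sufficiently small $d$.

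First, away from the poles (i.e.\ for $-\pi/2<\theta<\pi/2$), the surface $\bd K_n(c,d)$ is given as a radial graph $R_{c,d}(u)u$ of a $C^2$-function in polar coordinates, and the standard formula for the Gaussian curvature of such a surface expresses $\kappa(u;c,d)$ as a rational function of $R_{c,d}$ and its first and second partial derivatives in $\theta$ and $\varphi$, whose denominator involves only $R_{c,d}^2(R_{c,d}^2+(R_{c,d})_\theta^2+(R_{c,d})_\varphi^2/\cos^2\theta)$ and certain positive geometric factors. Since $R_{c,d}=1+d\rho_c$ with $\rho_c$ given by the explicit elementary expressions~(\ref{eq:rho}), (\ref{eq:a}), (\ref{eq:f}), whose denominators stay uniformly bounded away from $0$ as $(c,\theta,\varphi)$ ranges over $[c_1,c_2]\times(-\pi/2+\eta,\pi/2-\eta)\times\Re$ for any $\eta>0$, $\kappa$ is jointly continuous in $(u,c,d)$ on this region and satisfies $\kappa(u;c,0)\equiv 1$.

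Next, near the poles one must reparametrize as in the proof of Lemma~\ref{lem:C2}, writing $\bd K_n(c,d)$ locally as a graph $z=\Phi_{c,d}(x,y)$ over the $(x,y)$-plane. The computation in that proof, together with Lemma~\ref{lem:limits}, shows that $\Phi_{c,d}$ is $C^2$ at the poles and that its second derivatives there depend continuously on $(c,d)$; this also agrees with the explicit values of the principal curvatures $\frac{1\pm 2d}{\sqrt{1\pm d}}$ given in Remark~\ref{rem:curvature}, which are strictly positive for $d<1/2$. Hence $\kappa$ extends jointly continuously to the poles as well, with $\kappa(u_N;c,0)=\kappa(u_S;c,0)=1$. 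Compactness of $\S^2\times[c_1,c_2]$ and continuity of $\kappa$ now yield a $d^\star\in(0,1/2)$ such that $\kappa(u;c,d)>1/2$ for every $u\in\S^2$, $c\in[c_1,c_2]$, $d\in[0,d^\star)$. Combined with the $C^2$-smoothness established in Lemma~\ref{lem:C2}, positive Gaussian curvature implies local strict convexity, which together with the star-shapedness of $K_n(c,d)$ with respect to $o$ gives global convexity; thus $K_n(c,d)\in\mathcal{C}^2_+$.

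The main obstacle I anticipate is the joint continuity of $\kappa$ at the poles as $c$ varies over the full compact interval $[c_1,c_2]$: the limits in Lemma~\ref{lem:limits} were derived for fixed $c$, and one must check that they hold uniformly in $c\in[c_1,c_2]$. Since $\rho_c$ is built from explicit elementary functions of $c,\theta,\varphi$ whose denominators $\cos^2\frac{n\varphi}{2}\cos^2 f_c(\theta)+\sin^2\frac{n\varphi}{2}\cos^2 g_c(\theta)$ and $cx+(1-c)(1-x)^2$ stay bounded away from $0$ uniformly on $[c_1,c_2]\times\S^2$, this uniformity ultimately reduces to the same L'Hospital-type computations as in Lemma~\ref{lem:limits}, but carried out in the parameter $c$ simultaneously — a routine but somewhat tedious verification.
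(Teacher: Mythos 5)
Your proposal is correct and follows essentially the same route as the paper: both arguments reduce the claim to positivity of the curvature of the radial graph $R_{c,d}$, note that at $d=0$ the surface is the round sphere, and then use continuity of the curvature in $(u,c,d)$ together with compactness of $\S^2\times[c_1,c_2]$ to extract a uniform $d^\star$. Your version is somewhat more explicit about the behaviour at the poles and about the need for uniformity in $c$ of the limits from Lemma~\ref{lem:limits}, a point the paper's proof passes over silently, but the underlying argument is the same.
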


\begin{proof}
First, we remark that by Lemma~\ref{lem:C2}, it is sufficient to prove that under the conditions in the lemma, the principal curvatures at any point of the surface defined by $R_{c,d} : \S^2 \to \Re$ are strictly positive.

Note that by Lemma~\ref{lem:C2}, $R_{c,d}(u)$ is a $C^2$-class function of $u$ and for any fixed $u \in \S^2$, it is a continuous function of $c$ and $d$ with $(c,d) \in [c_1,c_2] \times [0,1]$. Furthermore, as $d \to 0+0$, the principal curvatures of the surface $S(c,d) = \{ R_{c,d}(u) u, u \in \S^2\}$ defined by this function tend to $1$. Thus, for any $(u,c) \in \S^2 \times [c_1,c_2]$ there is some $\delta(u,c)$ such that for any $(u',c',d') \in \S^2 \times [c_1,c_2] \times [0,1]$, if $||u'-u|| < \delta(u,c)$, $|c'-c| < \delta(u,c)$ and $0 \leq d' < \delta(u,c)$, then the principal curvatures of $S(c',d')$ at $R_{c',d'}(u') u'$ are strictly positive.

Let us call the set of points $(u',c') \in \S^2 \times [c_1,c_2]$ satisfying the inequalities 
$||u'-u|| < \delta$ and $|c'-c| < \delta$ the \emph{$\delta$-neighborhood} of $(u,c)$, and note that this set is open in $\S^2 \times [c_1,c_2]$.
Hence, by the compactness of $\S^2 \times [c_1,c_2]$, there are finitely many points $(u^i,c^i) \in \S^2 \times [c_1,c_2]$, where $i=1,2,\ldots, m$, such that the $\delta(u^i,c^i)$-neighborhoods of $(u^i,c^i)$ cover $\S^2 \times [c_1,c_2]$. Let $d^{\star} = \min \{ \delta(u^i,c^i) : i=1,2,\ldots, m \}$. Then, for any
$c \in [c_1,c_2]$ and $0 < d < d^{\star}$, the principal curvatures of the boundary of $K_n(c,d)$ at any point are strictly positive, implying that $K_n(c,d) \in \mathcal{C}^2_+$.
\end{proof}

We note that numerical solution of the equation $\lim_{d \to 0^+} H(c_0,d) = 0$ for $n=3$ yields $c_0=0.056...$, furthermore $K_3(c_0,d)$ is convex if $d<d^\star=0.0013...$. Due to the fact that $d^\star<<1$, the set $K_3(c_0,d^\star)$ is an almost perfect sphere which is illustrated with level sets in Fig.~\ref{fig:zebra}.

\begin{figure}
\centering
\includegraphics[width=10cm]{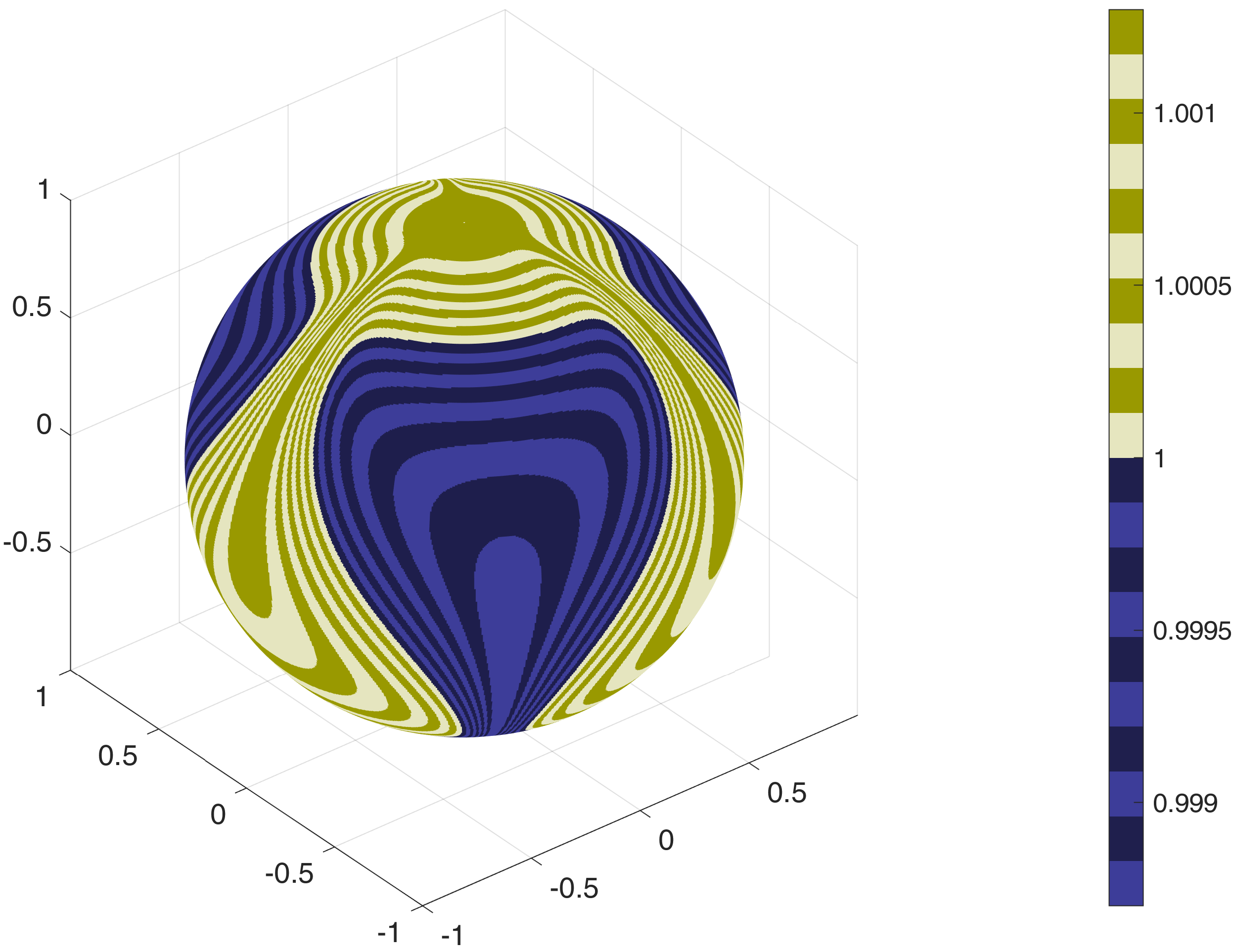}

\caption{Illustration of the homogeneous, convex, mono-monostatic body $K_3(0.056,0.0013)$ with $D_3$-symmetry. The shape is very close to a sphere. To indicate the deviations,  color scale is defined by the value of $R$, regions bounded by some chosen $R=constant$ level sets appear in constant color. Compare also with Fig.~\ref{fig:rho0contour}(b).}
\label{fig:zebra}
\end{figure}

\section{The proof of Theorem~\ref{thm:main}}\label{sec:proofofmain}

\subsection{The proof of (i) of Theorem~\ref{thm:main}}\label{subsec:i}

Let $K$ be a nondegenerate, mono-monostatic convex body with symmetry group $G$, and assume that the center of mass of $K$ is $o$.
First, we recall Lemma 1 from \cite{langi}, and remark that the fact that there is no rotationally symmetric nondegenerate convex body can be found in \cite{DK} in more detail.

\begin{lem}\label{lem:finite}
The symmetry group of any nondegenerate convex body $K$ is finite.
\end{lem}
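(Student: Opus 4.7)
The plan is to show that $G$ is a closed subgroup of $O(3)$ and hence a compact Lie subgroup, and then to argue that if $G$ were infinite its identity component would have to contain a one-parameter rotation subgroup, forcing $K$ to be a body of revolution---which is ruled out by the result of \cite{DK} that no rotationally symmetric convex body is nondegenerate.

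First, since $o$ is the centroid of $K$ and every symmetry preserves the centroid, we have $G \subset O(3)$. The group $G$ is closed in $O(3)$: if $g_m \to g$ in $O(3)$ with $g_m K = K$, then $gK = K$ by continuity of the $O(3)$-action in Hausdorff distance, so $g \in G$. Thus $G$ is a closed, and hence compact, Lie subgroup of $O(3)$. Suppose for contradiction that $G$ is infinite. Then its identity component $G_0$ cannot be trivial, for otherwise $G$ would be a discrete closed subset of the compact group $O(3)$ and therefore finite. So $G_0$ is a positive-dimensional closed connected Lie subgroup of $O(3)$. Up to conjugation, the only such subgroups of $O(3)$ are $SO(2)$, $O(2)$, $SO(3)$, and $O(3)$, each of which contains a one-parameter group of rotations about some axis $\ell$ through $o$. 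Consequently $K$ is invariant under all rotations about $\ell$, i.e.\ it is a body of revolution about $\ell$.

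To close the argument I would invoke the result from \cite{DK} that a nondegenerate convex body cannot be rotationally symmetric. The rough idea is that for such a body of revolution with centroid on the axis, any critical point of $\delta_K$ off the symmetry axis immediately sweeps out an entire circle of critical points under the rotation; each such point has a zero Hessian eigenvalue in the tangent direction along the circle, violating the Morse condition. The only remaining case---that every critical point of $\delta_K$ lies at one of the two points of $\ell \cap \bd(K)$---is the subtle one, and ruling it out requires a careful geometric analysis showing that the combination of convexity of $K$ with the centroid of $K$ lying on $\ell$ forces an intermediate critical circle on the profile curve. (In the polyhedral case the conclusion is immediate, since a polyhedron has only finitely many faces and therefore admits no continuous rotational symmetry at all.)

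The main obstacle is precisely this last geometric step, which is the substantive content of \cite{DK}; once it is granted, the Lie-theoretic reduction above is routine. I would expect the write-up to spend a single paragraph on each of the three items---the closedness of $G$, the Lie structure dichotomy, and the reference to \cite{DK}---and to cite the result rather than reproduce its proof.
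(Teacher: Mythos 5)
Your proposal is correct and matches the paper's treatment: the paper does not prove this lemma at all but simply recalls it as Lemma 1 of \cite{langi}, noting that the key fact---that no rotationally symmetric convex body is nondegenerate---is detailed in \cite{DK}, which is precisely the fact your Lie-theoretic reduction (closed subgroup of $\mathcal{O}(3)$, hence compact; infinite would force a circle subgroup and a body of revolution) correctly isolates and defers to.
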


By Lemma~\ref{lem:finite}, $G$ is a discrete subgroup of $\mathcal{O}(3)$. On the other hand, since $K$ has a unique stable and unstable point, every element of $G$ leaves also these points fixed. This implies that there is a $1$-dimensional linear subspace of $\Re^3$ fixed under every element of $G$. Thus, $G$ is isomorphic to a $2$-dimensional point group.

\subsection{The proof of (ii) of Theorem~\ref{thm:main}}\label{subsec:ii}

First, we investigate the case that $\mathcal{F} = (1,1)_c$.
In Section~\ref{sec:construction}, we have shown that for any $n \geq 2$, there is an element $K$ of $(1,1)_c$ whose symmetry group is $D_n$.
From this, (ii) for $(1,1)_c$ follows from the next two simple observations.

Before stating Lemma~\ref{lem:C1}, we recall that a convex body $K$ in $\Re^d$ is called \emph{smooth} if for every boundary point of $K$ there is a unique hyperplane supporting $K$ at $p$, or equivalently, the boundary of $K$ is $C^1$-class differentiable (cf. \cite{Schneider}).

\begin{lem}\label{lem:C1}
If $K_1,K_2 \subset \Re^d$ are smooth convex bodies, then so is $\conv (K_1 \cup K_2)$.
\end{lem}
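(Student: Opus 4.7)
The plan is to show that for every boundary point $p \in \bd(K)$ of $K := \conv(K_1 \cup K_2)$, the normal cone
\[
N_K(p) := \{u \in \Re^d : \langle x-p, u\rangle \leq 0 \text{ for all } x \in K\}
\]
is a single ray, which is equivalent to saying that there is a unique supporting hyperplane at $p$. Since $p \in K$, by splitting any convex combination representing $p$ according to which summand each vertex lies in, one obtains $p = \lambda q_1 + (1-\lambda) q_2$ with $q_i \in K_i$ and $\lambda \in [0,1]$. The proof then splits into two cases depending on whether $p$ lies in one of the $K_i$ or not.

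If $p \in K_i$ for some $i \in \{1,2\}$, then $p \in \inter(K_i)$ would imply $p \in \inter(K)$, contradicting $p \in \bd(K)$; hence $p \in \bd(K_i)$. From $K_i \subseteq K$ one immediately obtains $N_K(p) \subseteq N_{K_i}(p)$, and the smoothness of $K_i$ forces the latter cone to be a ray, so $N_K(p)$ is a ray as well.

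Otherwise $p \notin K_1 \cup K_2$, which forces $\lambda \in (0,1)$ in any such representation. Fix one such decomposition. For an arbitrary $u \in N_K(p)$, the inequalities $\langle q_i - p, u\rangle \leq 0$ together with the identity
\[
\lambda \langle q_1 - p, u \rangle + (1-\lambda)\langle q_2 - p, u\rangle = 0
\]
force both inner products to vanish, giving $\langle q_i, u\rangle = \langle p, u\rangle \geq \sup_{x\in K_i}\langle x,u\rangle \geq \langle q_i, u\rangle$. Hence $q_i$ maximizes $\langle \cdot, u\rangle$ over $K_i$; in particular $q_i \in \bd(K_i)$ and $u \in N_{K_i}(q_i)$. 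Therefore $N_K(p) \subseteq N_{K_1}(q_1)$, which is a ray by the smoothness of $K_1$, so $N_K(p)$ itself is a ray.

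The only genuine subtlety is in this last case, where $p$ is a boundary point of $K$ created by taking the convex hull and is not a boundary point of either summand. The key observation, essentially all there is to the argument, is that any supporting hyperplane of $K$ at $p$ must contain both $q_1$ and $q_2$, so it simultaneously supports $K_1$ at $q_1$ and $K_2$ at $q_2$; the smoothness of either summand then pins that hyperplane down uniquely.
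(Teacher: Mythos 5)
Your proof is correct and is essentially the paper's argument: both split into the cases $p\in K_i$ versus $p\notin K_1\cup K_2$, and in the latter case both rest on the observation that a supporting hyperplane of $K$ at $p=\lambda q_1+(1-\lambda)q_2$ must contain $q_1$ and $q_2$ and hence support each $K_i$ at $q_i$, so smoothness of either summand determines it uniquely. The only difference is cosmetic: you phrase the uniqueness via normal cones being rays, while the paper argues directly with two hypothetical distinct supporting hyperplanes.
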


\begin{proof}
Clearly, $K=\conv (K_1 \cup K_2)$ is compact, convex, and it has nonempty interior, i.e. it is a convex body. Let $p \in \bd (K)$. We show that there is a unique hyperplane supporting $K$ at $p$. If $p \in K_i$ for some $i=1,2$, then this statement follows from the facts that $K_i$ is smooth, and any hyperplane supporting $K$ at $p$ supports also $K_i$. Thus, assume that $p \in \bd (K) \setminus(K_1 \cup K_2)$. Then there are distinct points $p_1 \in K_1$ and $p_2 \in K_2$ and some $t \in (0,1)$ such that $p = tp_1 + (1-t)p_2$. Let $L$ be the straight line through $p_1$ and $p_2$. Observe that any hyperplane supporting $K$ at $p$ contains $L$. Hence, if $H_1$ and $H_2$ are distinct hyperplanes supporting $K$ at $p$, then they support $K_i$ at $p_i$ for all $i=1,2$, which contradicts the condition that $K_1$ and $K_2$ are smooth.
\end{proof}

\begin{prop}\label{prop:subgroup}
Let $K$ be an element of $(S,U)_c$ for some $S,U \geq 1$, and let $G$ denote the symmetry group of $K$. Then for every $\varepsilon > 0$ and every subgroup $H$ of $G$, there is an element $K'$ of $(S,U)_c$ such that $d_H(K,K') < \varepsilon$, and the symmetry group of $K'$ is $H$.
\end{prop}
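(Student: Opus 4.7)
The plan is to perturb $K$ by a small, $H$-invariant but not $G$-invariant, $C^2$-deformation of its radial function, then verify the perturbed body still lies in $(S,U)_c$ and has symmetry group exactly $H$. If $H = G$ the body $K$ itself works, so I assume $H \subsetneq G$. After translating so that the center of mass of $K$ is at $o$, the group $G$ acts orthogonally on $\Re^3$ and $K$ is described by a $G$-invariant radial function $R \colon \S^2 \to \Re_{>0}$ of class $C^2$. I would then pick a point $w \in \S^2$ with trivial $G$-stabilizer and a smooth nonnegative bump $\phi$ supported in a tiny geodesic neighborhood $V$ of $w$, chosen small enough that the translates $\{gV : g \in G\}$ are pairwise disjoint, and set
\[
\psi = \sum_{h \in H} \phi \circ h^{-1}.
\]
Then $\psi$ is smooth and $H$-invariant, with support $\bigcup_{h \in H} hV$; for any $g \in G \setminus H$, the support of $\psi \circ g^{-1}$ is $\bigcup_{h \in gH} hV$, which is disjoint from that of $\psi$, so no element of $G \setminus H$ fixes $\psi$.

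For a small parameter $\delta > 0$, let $K_\delta$ be the star-shaped set with radial function $R + \delta \psi$. Because convexity, positive Gaussian curvature, and nondegeneracy of equilibria are $C^2$-open conditions, for small $\delta$ the set $K_\delta$ is a convex body of class $\mathcal{C}^2_+$ whose squared distance from its own center of mass $c_\delta$ is a $C^2$-small perturbation of the analogous function for $K$; the usual persistence of nondegenerate Morse critical points then gives $K_\delta \in (S,U)_c$. Translating $K_\delta$ by $-c_\delta$ yields a body $K'$ centered at $o$; since $c_\delta$ is $H$-fixed, the translation is $H$-equivariant and preserves the symmetry group. A straightforward estimate gives $d_H(K, K') = O(\delta) < \varepsilon$ for $\delta$ small enough.

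The hard part will be ruling out extra symmetries of $K'$ beyond $H$. By construction $H \subseteq \operatorname{Sym}(K')$ and no element of $G \setminus H$ acts as a symmetry; the question is whether some $A \in O(3) \setminus G$ can appear. Lemma~\ref{lem:finite} makes $\operatorname{Sym}(K')$ finite, and because the continuous function $A \mapsto \|R \circ A - R\|_\infty$ vanishes exactly on $G$ and is bounded below on any compact subset of $O(3) \setminus G$, every symmetry of $K'$ lies in a neighborhood of $G$ that shrinks with $\delta$. To exclude a symmetry $A$ close to but distinct from some $g \in G$, I would write $A = g e^X$ with $X \in \mathfrak{o}(3)$ small and expand the invariance equation to first order in $X$ and $\delta$; the condition reduces to
\[
dR(u) \cdot Xu = \delta \, (\psi(u) - \psi(gu)) \quad \text{for all } u \in \S^2.
\]
The finite symmetry group of $R$ forces the linear map $X \mapsto (u \mapsto dR(u) \cdot Xu)$ to be injective on $\mathfrak{o}(3)$, and for a generic choice of $w$ and $\phi$ the function $\psi - \psi \circ g$ lies outside its $3$-dimensional image for every $g \in G \setminus H$; hence $X = 0$ and $A = g$, and then $\psi = \psi \circ g$ forces $g \in H$. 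This transversality/genericity step is the main technical obstacle in the proof.
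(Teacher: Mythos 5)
Your construction is genuinely different from the paper's: you perturb the radial function of $K$ by a small $H$-invariant sum of bumps $\delta\psi$, whereas the paper sets $K'=\conv\bigl(K\cup\bigcup_{\sigma\in H}\sigma(B)\bigr)$ for a small ball $B$ placed near a boundary point of $K$ away from neighborhoods of the equilibria, invoking Lemma~\ref{lem:C1} for smoothness and the same persistence reasoning for the count and nondegeneracy of equilibria. Up to and including the claims that $K_\delta\in(S,U)_c$, that $d_H(K,K')=O(\delta)$, and that no element of $G\setminus H$ is a symmetry (the disjoint-support argument is correct), your outline is sound and parallel in spirit to the paper's.

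The genuine gap is the step you yourself flag: excluding symmetries $A\in\mathcal{O}(3)\setminus G$. Your reduction of this to the nondegeneracy of the linearized equation $dR(u)\cdot Xu=\delta\,(\psi(u)-\psi(gu))$ is incomplete in two ways. First, the key transversality claim --- that for a generic choice of $w$ and $\phi$ the function $\psi-\psi\circ g$ avoids the $3$-dimensional image of $X\mapsto\bigl(u\mapsto dR(u)\cdot Xu\bigr)$ for every $g\in G\setminus H$ --- is asserted, not proved; it is the entire content of the step, and it is not obvious how to arrange it simultaneously for all $g$ while keeping $\psi$ of the prescribed equivariant form. Second, even granting it, a symmetry of $K'$ satisfies an exact, not a linearized, functional equation: one must first establish a quantitative bound of the form $\|X\|=O(\delta)$ (or show the remainder is dominated by the linear term) before the first-order analysis rules out solutions, and since $R$ is only $C^2$ the map $u\mapsto dR(u)$ is merely $C^1$, so an implicit-function-type argument needs care. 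The injectivity of $X\mapsto dR(\cdot)\cdot X(\cdot)$ does follow from Lemma~\ref{lem:finite} (a nonzero kernel element would make $R$ invariant under a one-parameter rotation group), but that alone does not close the argument. The paper's convex-hull construction makes this step essentially combinatorial --- a symmetry must permute the attached spherical caps and hence stabilize the $H$-orbit of $B$, forcing it into $H$ once the center of $B$ is placed generically --- which is why it avoids the transversality problem your route runs into.
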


\begin{proof}
Without loss of generality, assume that the center of mass of $K$ is $o$, and let the equilibrium points of $K$ be $p_1, \ldots, p_N$, with $N=2S+2U-2$. For any $i=1,2,\ldots, N$, let $U_i$ be an open neighborhood of $p_i$ in $\bd(K)$ such that the eigenvalues of Hessian of the Euclidean distance function from $o$ do not change signs in $V_i$. By the nondegeneracy of $K$, $o$ has an open neighborhood $U$ such that $K$ has the same number of equilibrium points with respect to any point $c \in U$, and each $V_i$ contains exactly one equilibrium point with respect to any $c \in U$, and the type of this point coincides with the type of $p_i$.

Let $B$ be a closed ball, and let $K_B = \conv (K \cup \bigcup_{\sigma \in H} \sigma (B))$. Clearly, by Lemma~\ref{lem:C1}, $K_B$ is a smooth convex body for all choices of $B$. On the other hand, if the radius of $B$ is sufficiently small and its center is sufficiently close to a boundary point of $K$ outside the closures of all the $V_i$s, then $\bd(K_B) \cap \bd(K)$ contains all $V_i$s, and the center of mass of $K_B$ is contained in $U$. Let $K'$ be chosen as such a convex body $K_B$.

Then $K'$ has $S$ stable and $U$ unstable points, each equilibrium point of $K'$ has a $C^2$-class neighborhood in $\bd(K')$, and $\bd(K')$ has strictly positive principal curvatures at any equilibrium point. In other words, in this case $K' \in (S,U)_c$. As the symmetry group of $K'$ is clearly $H$, $K'$ satisfies the conditions in the lemma.
\end{proof}

Until now, we have proved (ii) of Theorem~\ref{thm:main} for $\mathcal{F} = (1,1)_c$. The proof for $\mathcal{F} = (1,1)_p$ readily follows from Theorem 2 of \cite{langi}, stated as follows.

\begin{thm}\label{thm:approx}
Let $\varepsilon > 0$, $S,U \geq 1$ be arbitrary, and let $G$ be any subgroup of the orthogonal group $\mathcal{O}(3)$. Then for any centered, $G$-invariant convex body $K \in (S,U)_c$, there is a centered $G$-invariant convex polyhedron $P \in (S,U)_p$ such that $d_H(K,P) < \varepsilon$.
\end{thm}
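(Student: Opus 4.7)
The plan is to build a $G$-equivariant polyhedral approximation $P$ of $K$ whose equilibrium structure matches that of $K$ point for point, and then correct the center of mass by a small $G$-invariant translation. Write $p_1,\dots,p_N$ (with $N=2S+2U-2$) for the equilibrium points of $K$; by $G$-invariance of both $K$ and $\delta_K$ these decompose into $G$-orbits, and by nondegeneracy each $p_i$ is an isolated critical point of $\delta_K$ of a definite Morse type, at which the vector $\vect{op_i}$ is normal to $\bd(K)$. The core of the construction is a local \emph{matching} procedure: if $p_i$ is stable, insert a small flat face of $P$ orthogonal to $\vect{op_i}$ and tangent to $\bd(K)$ at $p_i$; if $p_i$ is unstable, make $p_i$ a vertex of $P$; if $p_i$ is saddle-type, insert a short edge perpendicular to $\vect{op_i}$ containing $p_i$ in its relative interior. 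Performing the same insertion along each entire orbit $Gp_i$ preserves $G$-invariance.

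Away from small open neighborhoods $U_i$ of the $p_i$, the boundary of $P$ is produced by a sufficiently fine $G$-invariant polyhedral approximation of $\bd(K)$: take a very dense $G$-invariant finite set of points in $\bd(K)$ together with the vertex sets introduced by the local matchings, and let $P$ be their convex hull, taking care that the inserted faces, edges, and vertices remain exposed. By construction, the outward normal of $P$ at each inserted face, edge, or vertex is parallel to $\vect{op_i}$, so $P$ has $S$ stable, $U$ unstable, and $S+U-2$ saddle equilibria at the prescribed locations and of the prescribed types. The essential point in verifying that no extra equilibria appear is that $\bd(K) \setminus \bigcup_i U_i$ is compact, and on it the angle between the outward normal of $\bd(K)$ and the radial direction $\vect{op}$ is uniformly bounded below by some positive constant $\alpha$. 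For a sufficiently fine mesh the outward normals of the polyhedral facets in this region deviate from those of $\bd(K)$ by less than $\alpha/2$, ruling out any radial facets, edges, or vertices outside the designated $U_i$.

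Since $P$ is $G$-invariant, its center of mass lies in the $G$-fixed subspace $V \subseteq \Re^3$, and for a sufficiently fine approximation this center of mass is within an arbitrarily small distance of $o$. A translation by the appropriate element of $V$ brings the center of mass to $o$, preserves $G$-invariance, and is small enough not to disturb either the Hausdorff estimate or the local equilibrium structure, since nondegenerate equilibria persist under small perturbations of the reference point and radial orthogonality of the inserted elements is invariant under rigid translation. The main difficulty is the quantitative coordination between scales: one must choose the sizes of the local insertions \emph{first}, and only then the density of the global mesh, so that each specially designed small face, edge, or vertex survives as an exposed face of the global convex hull rather than being absorbed into a larger facet that would lose the intended radial orthogonality. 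The uniform angular separation $\alpha$ away from the equilibria of $K$ makes this two-scale construction quantitative and delivers the desired polyhedron $P \in (S,U)_p$.
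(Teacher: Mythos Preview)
The paper does not contain a proof of this statement at all: it is simply quoted as ``Theorem~2 of \cite{langi}'' and used as a black box to pass from the smooth case to the polyhedral case. So there is no in-paper proof to compare your attempt against.

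As an outline of how the cited result is actually established, your sketch hits the right architecture---local tailoring of $P$ at each equilibrium point of $K$, a fine $G$-equivariant mesh elsewhere, a uniform angular gap on the complement of the $U_i$ to rule out spurious equilibria, and a final centering step---and this is indeed the strategy of \cite{langi}. Two places where your write-up is looser than the actual argument and would need real work are: (1) ``insert a short edge'' at a saddle is not an operation one can perform directly on a convex hull; one has to place a pair of vertices in the tangent plane at $p_i$ and then verify that the resulting segment is an exposed $1$-face, and that the transition to the surrounding mesh does not create extra equilibria on the adjacent edges and vertices; and (2) the sentence ``radial orthogonality of the inserted elements is invariant under rigid translation'' is not correct as stated---after translating by $-c$ the normal of the inserted face is still $p_i/|p_i|$, not $(p_i-c)/|p_i-c|$. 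What saves the argument is rather that nondegenerate polyhedral equilibria are open conditions (the foot of the perpendicular stays in the relative interior of the face/edge for reference points near $o$), so one first checks that $P$ has the correct equilibrium structure with respect to its own center of mass $c$, and only then translates; the equilibrium structure, being intrinsic, is carried along unchanged.
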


\section{Additional remarks}

\begin{rem}                                                                                                     
In the proof of Theorem~\ref{thm:main} we have shown that for any discrete $2$-dimensional point group $G$ and any $\varepsilon > 0$, there is an element $K \in \mathcal{F}$ whose symmetry group is $G$ and which satisfies $d_H(K,\BB^3) < \varepsilon$. Nevertheless, Lemma 4 of \cite{dedicata}, and the observation that the smoothing subroutine described in its proof preserves the symmetry group of the body, yield that the statement in (ii) of Theorem~\ref{thm:main} for $\mathcal{F}=(1,1)_c$ holds under the additional requirement that the body $K$ has a $C^{\infty}$-class differentiable boundary.
\end{rem}

\begin{rem}
Clearly, the proof of (i) of Theorem~\ref{thm:main} in Subsection~\ref{subsec:i} holds for convex bodies with a single stable \emph{or} a single unstable point, implying that for any $S,U \geq 1$, the symmetry group of any representative of any of the classes $(S,1)_c$, $(S,1)_p$, $(1,U)_c$, $(1,U)_p$ is a discrete $2$-dimensional point group.
\end{rem}

Finally, we recall a related open question from \cite{DKLRV}: The \emph{mechanical complexity} of a nondegenerate convex polyhedron $P$ is $C(P)=n(P)-N(P)$, where $N(P)$ denotes the total number of equilibrium points of $P$, and $n(P)$ denotes the total number of vertices, edges and faces of $P$, and furthermore, the mechanical complexity $C(S,U)$ of a class $(S,U)_p$ is defined as the minimum mechanical complexity of the elements of $(S,U)_p$. Here, despite the fact that the existence of a mono-monostatic convex polyhedron was shown in \cite{langi} (and follows also from Theorem~\ref{thm:main}), presently we have no explicit upper bound on the mechanical complexity of the class $(1,1)_p$. To motivate research in this direction, a prize has been offered in \cite{DKLRV} establishing the mechanical complexity $C(1,1)$, the amount $p$ of which is given in US dollars as
\begin{equation}
p=\frac{10^6}{C(1,1)}.
\end{equation}
For more details of this prize, the interested reader is directed to \cite{DKLRV}.


\begin{thebibliography}{99}

\bibitem{conway} J.H. Conway and R.K. Guy, \emph{Stability of polyhedra}, SIAM Rev. \textbf{8}(3) (1966), 381.

\bibitem{CFG} H.T. Croft, K. Falconer and R.K. Guy, \emph{Unsolved Problems in Geometry}, Springer, New York, NY, 1991.

\bibitem{Dawson} R. Dawson, \emph{Monostatic simplexes}, Amer. Math. Monthly \textbf{92} (1985), 541-546.

\bibitem{lunch} G. Domokos, \emph{My lunch with Arnold},The Mathematical Intelligencer \textbf{28} (4) 31-33. (2006)

\bibitem{DK} G. Domokos and F. Kov\'acs, \emph{Conway's spiral and a discrete G\"omb\"oc with $21$ point masses}, manuscript, arXiv:2103.13727, (math:MG), (math:CO). 

\bibitem{DKLRV} G. Domokos, F. Kov\'acs, Z. L\'angi, K. Reg\H os and P.T. Varga, \emph{Balancing polyhedra}, Ars Math. Contemp. \textbf{19} (2020), 95-124.

\bibitem{dedicata} G. Domokos, Z. L\'angi and T. Szab\'o, \emph{A topological classification of convex bodies}, Geom. Dedicata \textbf{182} (2016), 95-116.

\bibitem{dumitrescu} A. Dumitrescu and C.D. T\'oth, \emph{On the cover of the rolling stone}, Proceedings of the 31st Annual ACM-SIAM Symposium on Discrete Algorithms (2020), 2575-2586.

\bibitem{Evans} L.C. Evans and R.F. Gariepy, \emph{Measure Theory and Fine Properties of Functions}, Studies in Advanced Mathematics, CRC Press, 2015.

\bibitem{lft} L. Fejes T\'oth, \emph{Regular figures}, Pergamon, London, 1964.

\bibitem{goldberg} M. Goldberg and R.K. Guy, \emph{Stability of polyhedra (J.H. Conway and R.K. Guy)}, SIAM Rev. \textbf{11} (1969), 78-82.

\bibitem{archimedes} T.I. Heath (Ed.), \emph{The Works of Archimedes}, Cambridge University Press, 1897.

\bibitem{Klamkin} M.S. Klamkin, \emph{Problems in Applied Mathematics}, SIAM, Philadelphia, USA, 1990.

\bibitem{langi} Z. L\'angi, \emph{A solution to some problems of Conway and Guy on monostable polyhedra}, accepted in Bull. Lond. Math. Soc., arXiv:2008.02090 (math.MG).

\bibitem{Schneider} R. Schneider, \emph{Convex Bodies: The Brunn-Minkowski Theory}, second edition, Cambridge Unversity Press, Cambridge, UK, 2014. 

\bibitem{Shephard} G.C. Shephard, \emph{Twenty problems on convex polyhedra part II}, Math. Gaz. \textbf{52} (1968), 359-367.

\bibitem{gomboc} P.L. V\'arkonyi and G. Domokos, \emph{Static equilibria of rigid bodies: dice, pebbles and the Poincar\'e-Hopf Theorem}, J. Nonlinear Sci. \textbf{16} (2006), 255-281.


\end{thebibliography}
\end{document}